\newtheorem*{claim*}{Claim}
\newtheorem{theorem}{Theorem}[section]
\newtheorem{lemma}{Lemma}[section]
\newtheorem{proposition}{Proposition}[section]
\newtheorem{remark}{Remark}[section]
\newtheorem{definition}{Definition}[section]
\numberwithin{equation}{section}
\providecommand{\keywords}[1]
{
	\small	
	\textbf{\textit{Keywords---}} #1
}
\providecommand{\msc}[1]
{
	\small	
	\textbf{\textit{Mathematics Subject Classification (2020) ---}} #1
}
\title{{A family of potential wells for a semilinear pseudo-parabolic equation}}
\title{{Existence of global solutions to a semilinear pseudo-parabolic equation}}
\author{Joydev Halder\thanks{halderjoydev@gmail.com}\ }
\author[2]{Bhargav Kumar Kakumani\thanks{bhargav@hyderabad.bits-pilani.ac.in}}
\author{Suman Kumar Tumuluri \thanks{suman.hcu@uohyd.ac.in}}
\affil{School of Mathematics and Statistics, University of Hyderabad, Hyderabad, India}
\affil[2]{Department of Mathematics, BITS-Pilani, Hyderabad Campus, Hyderabad, India}
\begin{document}
	\maketitle

\begin{abstract}
In this article, we consider a  semilinear pseudo parabolic heat equation with the nonlinearity which is the product of logarithmic and polynomial functions. Here we prove the global existence of solution to the problem for arbitrary dimension $n \geq 1$ and power index $p>1$. Asymptotic behaviour of the solution has been addressed at different energy levels. Moreover, we prove that the global solution indeed decays with an exponential rate. Finally, sufficient conditions are provided under which blow up of solutions take place.
\end{abstract}
\keywords{Global existence; potential well; decay estimates; finite time blow-up}
\\
\msc{35A01, 35B40, 35K61, 35S16}
\section{Introduction}
In this article, we are interested to study the global existence and the  longtime behaviour of the following semilinear pseudo parabolic heat equation
\begin{equation}\label{main}
	\left\{
	\begin{aligned}
		&v_t-\Delta v_t -\Delta v=v|v|^{p-1}\log|v|, && t\in \mathbb{R}^+,\ x \in {{U}}, 
		\\
		&v(x,t)=0, && t\in \mathbb{R}^+,\ x \in \partial {{U}}, 
		\\
		&v(x,0)=v_0(x), && x \in {{U}},
	\end{aligned}
	\right.
\end{equation}
where ${{U}} \subset \mathbb{R}^n$ $(n \geq 1)$ is a smooth bounded domain and the power index $p>1$.

Pseudo equations are characterized by the appearance of a Laplacian of a time derivative of the unknown. Specially, pseudo-parabolic equations describe various important physical phenomena, such as the heat conduction involving two temperatures \cite{Chen1968_19}, the seepage of homogeneous fluids in fissured rock \cite{Barenblatt1960_24},  unidirectional propagation of long waves in nonlinear dispersive media \cite{Benjamin1972_272, Ting1963_14},  the aggregation of populations \cite{padron2004_356}, etc. In particular, problem (\ref{main}) also describes several natural phenomena (see \cite{kaikina2007_2007, kaikina2010_44, matahashi1978_22,matahashi1979_22} and the references there in). For example, in the analysis of nonstationary processes in semiconductors  the presence of source term, $v_t- \Delta v_t$ denotes the free electron density rate, $\Delta v$ denotes the linear dissipation of free charge current, and the source term represents a source of free electron current (see \cite{korpusov2003_43}). On the other hand, partial differential equations (PDEs) with polynomial and logarithmic nonliniearities are studied widely due to many applications in physics and other applied science such as theory of superfluidity, nuclear physics, transport phenomenon and diffusion phenomenon (see \cite{chen2012_3,chen2015_258,gazzola2005_18,chao2016_437,lian2020_40,yacheng2006_64,zloshchastiev2010_16}).

Let us consider the following pseuudo parabolic equation
\begin{equation}\label{pseudo_parabolic_equation}
	\left\{
	\begin{aligned}
		&v_t-\Delta v_t-\Delta v=f(v), &&  t\in \mathbb{R}^+,\ x \in {{U}},
		\\
		&v(x,t)=0, && t\in \mathbb{R}^+,\ x \in \partial {{U}}, 
		\\
		&v(x,0)=v_0(x), && x \in {{U}}.
	\end{aligned}
	\right.
\end{equation}
The authors in \cite{ting1969_21} considered \eqref{pseudo_parabolic_equation} when $f=0$ and proved the existence and uniqueness of solutions. After this precursory result,  many authors investigated the global exsistence, uniqueness, longtime behavior, regularity and blow-up phenomena of the solutions to \eqref{pseudo_parabolic_equation} for different choices of $f$ using various methods (see \cite{ambrosetti1973_14,hoshino1991_34,stetter1973_23,tsutsumi1972_17,weissler1979_32} and the references therein). In \cite{ payne1975_22,sattinger1968_30}, Sattinger proposed a powerful technique (popularly known as the``potential well method'') to study the existence and longtime behaviour of the solution to \eqref{pseudo_parabolic_equation}. Sattinger and Payne's work \cite{payne1975_22} was the most influential one which was followed by many mathematicians. In \cite{payne1975_22}, the authors  introduced a potential well $W$, an outer potential well $V$ in terms of a Nehari functional $I(u)$ and a  potential energy functional $J(v)$.  Under few technical assumptions on $f$, they studied the properties of $W,\ V,\ J(v) $. Using the invariance of the potential well,  they proved the finite time blow-up of the solutions to the nonlinear heat equation. After that, the method was improved by many mathematicians and physicists to study different types of PDEs such as nonlinear heat, and nonlinear wave equations (see \cite{chen2012_3,chen2015_258,yacheng2003_192,liu2020_28,lian2020_40,yacheng2006_64,han2019_164,chen2015_422,han2019_474}). The authors of \cite{xu2013_264} considered \eqref{pseudo_parabolic_equation} when $f(v)=v^p$ and under some assumptions on $p$ and $n$ they  established existence and studied asymtoptic behaviour of the solution.  Using the properties of potential wells (PWs) the authors in \cite{xu2013_264}  proved the global existence and investigated the longtime behavior of solutions. However in \cite{liu2018_274, xu2016_270}, the authors amend the proofs of  \cite{xu2013_264}.  In particular, the authors  of \cite{zhou2021_200} generalized and extended the results of  \cite{xu2013_264} using a family of PWs. Moreover, the authors of \cite{chen2015_258} investigated the global existence,  exponential decay and finite time blow-up of solutions to \eqref{pseudo_parabolic_equation} when the source term is given by $v\log|v|$. Recently, many authors  studied \eqref{pseudo_parabolic_equation} using method of potetial wells for different choice of $f(v)$ (see \cite{ liu2018_2018,luo2015_38, xu2020}). In particular, the authors of \cite{zhou2020_1} considered \eqref{pseudo_parabolic_equation} and studied the global existence, blow-up of solution when the source term is given by $|x|^\sigma v|v|^{p-1}$,  $1<p<\infty, \ \sigma> -n$, if $n=1,2$; $1 <p \leq  \frac{n+2}{n-2}, \sigma > \frac{(1+p)(n-2)}{2}-n$, if $n\geq 3$. 

 Moreover, when $f(v)$ in \eqref{pseudo_parabolic_equation} is solely polynominal type $i.e.$, $f(v)=|v|^p$ or $v|v|^{p-1}$, then the authors of \cite{han2020_99} established the global existence, and blow up with restrictions on the power index  viz.,   $1 <p \leq  \frac{n+2}{n-2}$ if $n\geq 3$. The main objective of this paper is to study the global existence, and blow up phenomenon of solutions to \eqref{main} with out any restriction on $p$ and $n$. We also discuss the asymptotic behavior of solution to \eqref{main} at all three energy levels (super-critical, critical and sub-critical). 

The paper is organized as follows. We give some definitions and introduce the family of  PWs in Section 2. We recall the results which are helpful to prove the global existence and longtime behavior of the solution to \eqref{main} in Section 2. In Section 3, we prove the global existence of a weak solution to the problem \eqref{main} under some conditions for an arbitrary $n$ and $p$. A decay estimate on the solution in $H_0^1$-norm is discussed in Section 4. In the last section, we obtian the finite time blow-up of weak solutions to \eqref{main} under the cases $J(v_0) < d$, $J(v_0) = d$ and $J(v_0) > 0$.
\section{Preliminaries}
Henceforth, we denote the norm $\|\cdot\|_p$ by $\|g\|_p = (\int_{{U}} |g|^pdx )^{\frac{1}{p}}, \forall g \in L^p({{U}})$, $1 \leq p < \infty$ and the inner product $(h,g)_2=\int_{{U}} h g dx$. When $p=2$, we simply write $\|\ \|$ instead of $\| \ \|_2$.
\begin{definition} (Weak solution)
	A function 
	\[
	v \in L^\infty ((0,T); H^1_0({{U}}))\ \ {\rm with\ } v_t \in  L^2 ((0,T); L^2 ({{U}})), v(x,0)=v_0(x)
	\]
 is called a weak solution to \eqref{main} if it satisfies \eqref{main} in weak sense $i.e.,$ \begin{equation}\label{weaksolution}
 	{(v_t, w)_2+(\nabla v_t, \nabla w)_2+(\nabla v, \nabla w)_2=(v|v|^{p-1}\log|v|, w)_2, \forall w\in H^1_0({{U}}),\ t\in(0,T).} 
 \end{equation}
\end{definition}
\noindent
For completeness, we recall here the   definitions of the maximal existence time, and the notion of the finite time blow-up which are quite standard.
\begin{definition}
	Let $v:=v(x,t)$ be a weak solution to \eqref{main}. The maximal existence time $T$ of $v$ is defined as follows:\\
	$(i)$ If $v$ exists for every $t >0$, then $T=\infty$.
	\\
	$(ii)$ If there exists $\hat{t} \in (0,\infty)$ such that $ v$ exists for $0\leq t <\hat{t}$, but does not exist at $t=\hat{t}$, then $T=\hat{t}$.
\end{definition}
\begin{definition}
	A weak solution $v$ of \eqref{main} is said to blow-up in finite time if the maximal existence time $T$ is finite and $$\lim\limits_{t \to T^-} \|v(.,t)\|_{H^1_0({{U}})}=\infty.$$
\end{definition}
\subsection{Potential wells}
In this subsection, we define the energy functionals  associated to the nonlinear term $v|v|^{p-1}\log |v|$ and the corresponding family  of PWs and state few  results which are useful for analysis in the subsequent sections.\\
First, we define the potential energy functional $J$ as
\begin{equation}\label{potential}
	J(v)=\frac{1}{2}\|\nabla v\|^2- \frac{1}{1+p}\int\limits_{{{U}}}|v|^{1+p}\log |v| dx+\frac{1}{(1+p)^2}\|v\|_{1+p}^{1+p},
\end{equation}
and the Nehari functional $I$ is defined as
\begin{equation}\label{nehari}
	I(v)=\|\nabla v\|^2- \int\limits_{{{U}}}|v|^{1+p}\log |v| dx.
\end{equation}
From \eqref{potential} and \eqref{nehari}, observe that 
\begin{equation}\label{combo}
	J(v)=\frac{p-1}{2(1+p)}\|\nabla v\|^2+\frac{1}{(1+p)^2}\|v\|_{1+p}^{1+p} +\frac{1}{1+p}I(v).
\end{equation}
The Nehari manifold $\mathcal{N}$ is defined as 
\begin{equation*}
	\mathcal{N}(v)=\{v \in H^1_0({{U}}): I(v)=0\ {\rm and\ } \|\nabla v\|^2 \neq 0\},
\end{equation*}
and the depth of the well $d$ is defined as
\begin{equation*}
	d=\inf\limits_{v\in \mathcal{N}} J(v).
\end{equation*}
We now introduce the potential well $W$ and the outer potential well $V$ as follows:
\begin{equation*}
	W= \{0\} \cup \{v\in H_0^1({{U}}):   I(v)>0,\ J(v)<d\},
\end{equation*}
and
\begin{equation*}
	V=\{v\in H_0^1({{U}}):  J(v)<d,\ I(v)<0 \}.
\end{equation*}
If $v$ is a weak solution to \eqref{main} then multiplying $v_t$ to \eqref{main} and integrating over ${{U}} \times [0,t)$, we get 
\begin{equation}\label{energy_inequality}
	\int\limits_{0}^{t} \|v_t(\cdot,\tau)\|^2_{H^1_0({{U}})} d\tau +J(v(.,t))=J(v_0(.)),\  t\in [0,T).
\end{equation}
Set the energy functional $E$ associated to \eqref{main} by
\begin{equation*}
	E(v(\cdot,t))=J(v(\cdot,t))+ \int\limits_{0}^t \|v_t\|^2_{H^1_0({{U}})} d \tau.
\end{equation*}
The equation \eqref{energy_inequality}  immediately gives $E(v(\cdot,t)) = E(v_0)$. Thus \eqref{energy_inequality} is referred as the conservation of energy.\\
Next, we extend the notion of a ``single potential well to the family of PWs'' by defining
\begin{equation}\label{d_potential}
	J_{\delta}(v)=\frac{\delta}{2}\|\nabla v\|^2- \frac{1}{1+p}\int\limits_{{{U}}}|v|^{1+p}\log |v| dx+\frac{1}{(1+p)^2}\|v\|_{1+p}^{1+p},
\end{equation}
and 
\begin{equation}\label{d_nehari}
	I_{\delta}(v)=\delta \|\nabla v\|^2- \int\limits_{{{U}}}|v|^{1+p}\log |v| dx,
\end{equation}
where $\delta >0$.
We define the corresponding Nehari manifolds $\mathcal{N}_{\delta}$ as
\begin{equation*}
	\mathcal{N}_{\delta}(v)=\{v \in H^1_0({{U}}): I_{\delta}(v)=0,\ \|\nabla v\|^2 \neq 0\},
\end{equation*}
and ``the depth of family of PWs'' $d(\delta)$ as
\begin{equation}\label{d_delta}
	d(\delta)=\inf\limits_{v\in \mathcal{N}_\delta} J(v).
\end{equation}
Moreover, we define the family of PWs $W_\delta$ as
\begin{equation*}
	W_\delta=\{v\in H_0^1({{U}}): J(v)<d(\delta),\ I_\delta (v)>0 \} \cup \{0\},
\end{equation*}
and the outer of the family of PWs $V_\delta$ as
\begin{equation*}
	V_\delta=\{v\in H_0^1({{U}}): J(v)<d(\delta),\ I_\delta(v)<0 \}.
\end{equation*}
Since the potential energy $J$ and $I$ defined in  \eqref{potential}--\eqref{nehari} are the same as those in \cite{lian2020_40} (in the context of semilinear wave equation). Now, ``we recall few results from \cite{lian2020_40} and \cite{chen2015_258}:
\begin{lemma}(Cf. \cite{lian2020_40}, Lemma 2.1)\thlabel{lemma1}
	Let $v \in H_0^1 ({{U}})$ and $\|v\|\neq 0$. Then the following hold true.\medskip \\
	$(i)$ $\lim\limits_{\beta \to 0} J(\beta v)=0$, $\lim\limits_{\beta \to \infty} J(\beta v)=- \infty.$\medskip \\
	$(ii)$ There exists $\beta^* = \beta ^*(v)$ in $(0, \infty)$ such that
	$$\frac{d}{d \beta} J(\beta v) \mid_{\beta= \beta^* }=0.$$
	$(iii)$ The function $\beta \mapsto J(\beta v)$ is decreasing on $\beta^* \leq \beta <\infty$, increasing on $0 \leq \beta \leq  \beta^*$ and attains its maximum at $\beta=\beta^*$.\medskip \\
	$(iv)$ The function $I$ satisfies  $I(\beta^*v)=0$, $I(\beta v)=\beta \frac{d}{d \beta}J(\beta v) >0$ for $0<\beta <\beta^*$, and $I(\beta v)< 0$ for $\beta^* < \beta < \infty$. 
\end{lemma}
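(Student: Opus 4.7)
The plan is to study the scalar function $\varphi(\beta):=J(\beta v)$ on $[0,\infty)$ for fixed $v\in H_0^1(U)$ with $\|v\|\neq 0$, and to read off all four claims from its shape. Writing $A:=\|\nabla v\|^2>0$, $B:=\|v\|_{1+p}^{1+p}>0$, and $C:=\int_U |v|^{1+p}\log|v|\,dx\in\mathbb{R}$, and using $|\beta v|^{1+p}=\beta^{1+p}|v|^{1+p}$ together with $\log|\beta v|=\log\beta+\log|v|$ for $\beta>0$, I would first expand
\begin{equation*}
\varphi(\beta)=\frac{\beta^2}{2}A-\frac{\beta^{1+p}\log\beta}{1+p}\,B-\frac{\beta^{1+p}}{1+p}\,C+\frac{\beta^{1+p}}{(1+p)^2}\,B.
\end{equation*}
Part (i) is then immediate: as $\beta\to 0^+$ every term vanishes (since $\beta^{1+p}\log\beta\to 0$ because $1+p>0$), while as $\beta\to\infty$ the dominant term $-\frac{\beta^{1+p}\log\beta}{1+p}B$ pushes $\varphi$ to $-\infty$.

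For (ii) and the identity in (iv), I would differentiate and observe that the $\pm\frac{\beta^p B}{1+p}$ pieces cancel, leaving
\begin{equation*}
\varphi'(\beta)=\beta A-\beta^p(B\log\beta+C).
\end{equation*}
Multiplying by $\beta$ I recognize $\beta\varphi'(\beta)=\beta^2 A-\beta^{1+p}(B\log\beta+C)=I(\beta v)$, so the claim $I(\beta v)=\beta\,\frac{d}{d\beta}J(\beta v)$ in (iv) holds automatically. The critical-point equation reduces to $A=\phi(\beta)$ with $\phi(\beta):=\beta^{p-1}(B\log\beta+C)$. Since $p>1$ forces $\phi(0^+)=0$ and $\phi(\beta)\to+\infty$ as $\beta\to\infty$, the intermediate value theorem supplies at least one $\beta^*\in(0,\infty)$ with $\varphi'(\beta^*)=0$.

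The main obstacle is uniqueness of $\beta^*$ and the monotonicity needed for (iii); this is where I would work hardest. My strategy is to show that every critical point of $\varphi$ is a strict local maximum. Differentiating a second time gives
\begin{equation*}
\varphi''(\beta)=A-p\,\beta^{p-1}(B\log\beta+C)-\beta^{p-1}B,
\end{equation*}
and at a critical point I can substitute the relation $(\beta^*)^{p-1}(B\log\beta^*+C)=A$ to obtain
\begin{equation*}
\varphi''(\beta^*)=A-pA-(\beta^*)^{p-1}B=-(p-1)A-(\beta^*)^{p-1}B<0.
\end{equation*}
Hence every critical point is a strict local maximum, and since two strict local maxima of a smooth function on an interval would force a local minimum in between (contradicting the previous sentence), $\beta^*$ is unique. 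Combined with the limits from (i), this pins down $\varphi'>0$ on $(0,\beta^*)$ and $\varphi'<0$ on $(\beta^*,\infty)$, yielding (iii). Feeding this back into $I(\beta v)=\beta\varphi'(\beta)$ then produces $I(\beta^*v)=0$, $I(\beta v)>0$ for $0<\beta<\beta^*$, and $I(\beta v)<0$ for $\beta>\beta^*$, completing (iv).
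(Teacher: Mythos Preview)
Your argument is correct. The expansion of $\varphi(\beta)=J(\beta v)$ in terms of $A,B,C$ is accurate, the limits in (i) are checked properly, the identity $I(\beta v)=\beta\varphi'(\beta)$ is derived cleanly, and the second-derivative computation showing $\varphi''(\beta^*)=-(p-1)A-(\beta^*)^{p-1}B<0$ is exactly the right mechanism to force uniqueness of the critical point and hence the monotonicity in (iii) and the sign pattern in (iv). One small point worth making explicit: you assert $A=\|\nabla v\|^2>0$, whereas the stated hypothesis is only $\|v\|\neq 0$; this is fine because $v\in H_0^1(U)$ with $\|\nabla v\|=0$ would force $v\equiv 0$, but it is good form to say so.

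As for comparison with the paper: the paper does not actually prove this lemma. It cites \cite{lian2020_40}, Lemma~2.1, and after listing this and the surrounding lemmas states that ``the proof's of the above results follows from the similar lines given in \cite{lian2020_40} and \cite{chen2015_258}. So we omit the details.'' Your write-up therefore supplies what the paper skips. The approach you take---reducing to a one-variable analysis of $\varphi$, exploiting the cancellation in $\varphi'$, and using the sign of $\varphi''$ at critical points to obtain uniqueness---is the standard route in this literature and is consistent with how \cite{lian2020_40} handles it.
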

\begin{lemma}(Cf. \cite{lian2020_40}, Lemma 2.2)\thlabel{lemma2}
	Assume $\delta>0$ and  $\phi(r)=C^{p+2}r^p,$ where $C=\sup\limits_{v\in H^1_0({{U}})} \displaystyle\frac{\|v\|_{p+2}}{\|\nabla v\|}$ and $r(\delta)$ is the unique real root of the equation $\phi(r)=\delta$.  Then we have:
	\medskip \\
	$(i)$ $I_\delta(v)>0$, provided $0<\|\nabla v\| \leq r(\delta)$.
	\medskip \\
	$(ii)$ $\|\nabla v\|> r(\delta)$, provided $I_\delta(v)<0$.
	\medskip \\
	$(iii)$ $\|\nabla v\|> r(\delta)$ or $\|\nabla v\|=0$, provided $I_\delta(v)=0$. 
	
\end{lemma}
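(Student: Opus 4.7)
The plan is to derive all three parts from a single pointwise estimate that controls the logarithmic nonlinearity by the $L^{p+2}$ norm, which is then dominated by the gradient norm via the Sobolev embedding constant $C$. Specifically, because $\log s \leq s$ for every $s>0$, with strict inequality whenever $s>0$, multiplying by $|v|^{p+1}$ (and using $p>0$ to kill the singularity at $|v|=0$) and integrating gives
\[
\int_U |v|^{p+1}\log|v|\,dx \;\leq\; \|v\|_{p+2}^{p+2} \;\leq\; C^{p+2}\,\|\nabla v\|^{p+2},
\]
the first inequality being strict whenever $v\not\equiv 0$, since $\{|v|>0\}$ then has positive measure and $s-\log s>0$ there. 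Inserting this into the definition of $I_\delta$ produces the master bound
\[
I_\delta(v)\;\geq\; \delta\|\nabla v\|^2 - C^{p+2}\|\nabla v\|^{p+2}\;=\;\|\nabla v\|^2\bigl(\delta-\phi(\|\nabla v\|)\bigr),
\]
which is strict whenever $\|\nabla v\|>0$.

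From this single estimate every part drops out by combining $\phi(r(\delta))=\delta$ with the strict monotonicity of $\phi(r)=C^{p+2}r^p$ in $r$. For (i), the hypothesis $\|\nabla v\|\leq r(\delta)$ makes the bracket $\delta-\phi(\|\nabla v\|)$ nonnegative, and the extra \emph{strict} step coming from $\log s<s$ upgrades this to $I_\delta(v)>0$ even at the boundary $\|\nabla v\|=r(\delta)$. For (ii), if $I_\delta(v)<0$ then the master bound forces $\delta<\phi(\|\nabla v\|)$, hence $\|\nabla v\|>r(\delta)$. For (iii), if $I_\delta(v)=0$ and $\|\nabla v\|\neq 0$, the strict version of the master bound again yields $\delta<\phi(\|\nabla v\|)$ and therefore $\|\nabla v\|>r(\delta)$; the only remaining alternative is $\|\nabla v\|=0$.

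I expect the only delicate point to be the boundary case $\|\nabla v\|=r(\delta)$ in (i) and in (iii): this is exactly where the routine bound $\log s\leq s$ is insufficient and one must retain the strict version, which in turn requires the short observation that strict pointwise inequality of nonnegative integrands survives integration over the set $\{|v|>0\}$ (of positive measure whenever $\|\nabla v\|\neq 0$, since $v\in H^1_0(U)$). Beyond that the argument is essentially bookkeeping with $\phi$, $r(\delta)$ and the definition of $C$.
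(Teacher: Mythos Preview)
Your argument is correct. The paper itself does not supply a proof of this lemma: it is stated with the attribution ``Cf.\ \cite{lian2020_40}, Lemma 2.2'' and the authors explicitly write that the proofs ``follow from the similar lines given in \cite{lian2020_40} and \cite{chen2015_258}. So we omit the details.'' Your derivation is precisely the standard one underlying the cited reference: bound the logarithmic term via the elementary pointwise inequality $\log s < s$ for $s>0$, absorb it into $\|v\|_{p+2}^{p+2}$, and then invoke the Sobolev constant $C$ to close everything in terms of $\|\nabla v\|$. Your attention to the strictness at the boundary $\|\nabla v\|=r(\delta)$---needed for the strict conclusion $I_\delta(v)>0$ in (i) and for (iii)---is exactly the point that has to be checked, and your justification (strict pointwise inequality on the positive-measure set $\{|v|>0\}$) is sound. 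Parts (ii) and (iii) then follow as you indicate, essentially as contrapositives of (i).
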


\begin{lemma}(Cf. \cite{lian2020_40}, Lemma 2.3)\label{lemma3}
	The function $\delta \mapsto d(\delta)$ defined in \eqref{d_delta} has the following properties:\medskip \\
	$(i)$ $d(\delta)= (\frac{1}{2}-\frac{\delta}{1+p})r^2(\delta)>0$ for $0<\delta <\frac{1+p}{2}$.
	\medskip \\
	$(ii)$ There exists a unique $ \delta_0>\frac{1+p}{2}$ such that $d(\delta_0)=0$, and $d(\delta)>0$ for $0<\delta<\delta_0$.
	\medskip \\
	$(iii)$ The function $\delta \mapsto d(\delta)$ is decreasing on $1\leq \delta \leq \delta_0$, strictly increasing on $0 <\delta \leq 1$, at $\delta=1$ this function attains local maximum and $d(1)=d$. 
\end{lemma}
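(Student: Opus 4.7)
The plan is to unify the three parts through a single key identity: for any $v \in \mathcal{N}_\delta$, substituting $\int_{U} |v|^{1+p}\log|v|\,dx = \delta\|\nabla v\|^2$ into $J$ yields
\begin{equation*}
J(v) \;=\; \Bigl(\tfrac{1}{2}-\tfrac{\delta}{1+p}\Bigr)\|\nabla v\|^2 \;+\; \tfrac{1}{(1+p)^2}\|v\|_{1+p}^{1+p}.
\end{equation*}
For part $(i)$ with $0 < \delta < (1+p)/2$, both terms on the right are nonnegative and, since $\|\nabla v\|\neq 0$ on $\mathcal{N}_\delta$, Lemma~2.2$(iii)$ forces $\|\nabla v\| > r(\delta)$. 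Discarding the nonnegative $L^{1+p}$-term and passing to the infimum over $\mathcal{N}_\delta$ delivers $d(\delta) \geq (\tfrac{1}{2}-\tfrac{\delta}{1+p})\, r^2(\delta) > 0$, as claimed.

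For part $(iii)$, first note $\mathcal{N}_1 = \mathcal{N}$, so $d(1)=d$. To prove strict increase on $(0,1]$, fix $0 < \delta_1 < \delta_2 \leq 1$ and any $u \in \mathcal{N}_{\delta_2}$. Because $I_{\delta_1}(u) = (\delta_1-\delta_2)\|\nabla u\|^2 < 0$ while $I_{\delta_1}(\beta u)$ is positive for $\beta\downarrow 0$, continuity supplies a unique $\beta^\star = \beta^\star(u,\delta_1) \in (0,1)$ with $\beta^\star u \in \mathcal{N}_{\delta_1}$. Since $I(u) = (1-\delta_2)\|\nabla u\|^2 \geq 0$, Lemma~2.1$(iv)$ places the critical point of $\beta \mapsto J(\beta u)$ at some $\beta^* \geq 1$, so by Lemma~2.1$(iii)$ this map is increasing on $[0,1]$, giving $J(\beta^\star u) < J(u)$. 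Writing
\begin{equation*}
J(u) - J(\beta^\star u) \;=\; \int_{\beta^\star}^{1} \tfrac{1}{\beta}\, I(\beta u)\,d\beta,
\end{equation*}
one can extract a positive uniform gap along any minimizing sequence for $d(\delta_2)$ and conclude $d(\delta_1) < d(\delta_2)$. The decreasing behaviour on $[1,\delta_0]$ follows symmetrically by scaling $u \in \mathcal{N}_{\delta_1}$ up to $\beta^\star u \in \mathcal{N}_{\delta_2}$ with $\beta^\star > 1$ and using that $J(\beta u)$ is decreasing past its critical point.

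For part $(ii)$, the strict monotonicity of $(iii)$ guarantees that any zero of $d$ beyond $\delta = 1$ is unique. Positivity of $d$ on $(0, (1+p)/2)$ comes from $(i)$; combining this with the non-increasing behaviour on $[1,\delta_0]$ and continuity of $d(\cdot)$ identifies the largest interval on which $d$ stays positive. To confirm that $d$ actually vanishes at some $\delta_0 > (1+p)/2$, use the reduced formula for $J$ on $\mathcal{N}_\delta$: once $\delta > (1+p)/2$ the gradient term becomes negative, and scaling a fixed trial profile continuously drives $d(\delta)$ down to $0$ at a finite value. The most delicate step will be $(iii)$: upgrading the pointwise strict inequality $J(\beta^\star u) < J(u)$ into a uniform gap along a minimizing sequence. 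This is where the integral identity above, together with the gradient lower bound $\|\nabla v\| > r(\delta)$ from Lemma~2.2, does the real work.
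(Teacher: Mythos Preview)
The paper gives no proof of its own, deferring entirely to \cite{lian2020_40}; your outline is precisely the standard scaling argument from that source, so there is no genuine alternative to compare against. Two remarks are nevertheless worth making.

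First, in $(i)$ you obtain only $d(\delta)\ge(\tfrac12-\tfrac{\delta}{1+p})r^2(\delta)$, not the equality printed in the lemma. The inequality is what the cited reference actually states and is all that is ever used downstream (e.g.\ in Lemma~\ref{lemma3}$(ii)$ and in the blow-up arguments), so the ``$=$'' in the statement is a transcription slip, not a deficiency in your argument; equality cannot hold in general because it would force both $\|\nabla v\|=r(\delta)$ and $\|v\|_{1+p}=0$ simultaneously along a minimizing sequence.

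Second, in $(ii)$ you invoke ``continuity of $d(\cdot)$'' to locate $\delta_0$ but never justify it; this is the only place your plan is genuinely incomplete. The standard fix is to note that for each fixed direction $u$ the map $\delta\mapsto J(\lambda(\delta)u)$ (with $\lambda(\delta)u\in\mathcal N_\delta$) is continuous, which makes $d$ upper semicontinuous as an infimum of continuous functions, and then to combine this with the monotonicity from $(iii)$---which your scaling argument in fact establishes on all of $[1,\infty)$, not merely on $[1,\delta_0]$---to rule out a jump across zero. Your treatment of the uniform gap in $(iii)$ is on the right track; to close it, observe that along a minimizing sequence for $d(\delta_2)$ with $\delta_2\le 1$ the reduced formula $J(u)=(\tfrac12-\tfrac{\delta_2}{1+p})\|\nabla u\|^2+\tfrac{1}{(1+p)^2}\|u\|_{1+p}^{1+p}$ also bounds $\|\nabla u\|$ from above, and this two-sided control on $\|\nabla u\|$ keeps $\beta^\star$ uniformly away from $1$.
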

\begin{lemma}(Cf. \cite{lian2020_40}, Lemma 2.4)\label{lemma_inv}
	Assume $0<J(v)<d$ for some $v \in H^1_0({{U}})$ and $\delta_1, \delta_2$ are two roots of the equation $d(\delta)=J(v)$ such that $\delta_1< 1< \delta_2$, then the sign of $I_\delta(v)$ does not change in $\delta_1<\delta < \delta_2$. 
\end{lemma}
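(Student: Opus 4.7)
The plan is to exploit the fact that for each fixed $v \in H^1_0(U)$, the map $\delta \mapsto I_\delta(v) = \delta\|\nabla v\|^2 - \int_U |v|^{p+1}\log|v|\,dx$ is an affine (in fact linear-in-$\delta$) function. Hence if it changes sign on the open interval $(\delta_1,\delta_2)$, it must vanish at some unique interior point, and I will show this leads to a contradiction with the definition of $d(\delta)$.

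First, I would make sure the interval $(\delta_1,\delta_2)$ is well-posed via Lemma \ref{lemma3}: since $d$ is strictly increasing on $(0,1]$, strictly decreasing on $[1,\delta_0]$, $d(1)=d$ and $d(\delta_0)=0$, the hypothesis $0<J(v)<d$ combined with the intermediate value theorem on each monotonicity branch yields the two roots $\delta_1 \in (0,1)$ and $\delta_2 \in (1,\delta_0)$. The same monotonicity gives the key pointwise estimate
$$d(\delta) > J(v) \qquad \text{for every } \delta \in (\delta_1,\delta_2),$$
because for $\delta \in (\delta_1,1]$ strict increase yields $d(\delta)>d(\delta_1)=J(v)$, and for $\delta \in [1,\delta_2)$ strict decrease yields $d(\delta)>d(\delta_2)=J(v)$.

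Next, suppose for contradiction that $\delta \mapsto I_\delta(v)$ changes sign on $(\delta_1,\delta_2)$. The assumption $J(v)>0$ forces $v \not\equiv 0$, and by Poincaré's inequality $\|\nabla v\| \neq 0$, so $\delta \mapsto I_\delta(v)$ is a strictly monotone affine function of $\delta$. Therefore it has exactly one zero $\delta^* \in (\delta_1,\delta_2)$, and at that value $v \in \mathcal{N}_{\delta^*}$. Definition \eqref{d_delta} then gives
$$J(v) \;\geq\; \inf_{w \in \mathcal{N}_{\delta^*}} J(w) \;=\; d(\delta^*),$$
contradicting the pointwise estimate $d(\delta^*) > J(v)$ established above. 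Hence $I_\delta(v)$ cannot change sign on $(\delta_1,\delta_2)$.

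I do not expect any serious obstacle here; the only delicate point is invoking the strict monotonicity of $d$ on both sides of $\delta = 1$ (which is precisely the content of Lemma \ref{lemma3}) to guarantee the strict inequality $d(\delta)>J(v)$ on the whole open interval. Once that is in hand, the affine structure of $I_\delta$ in $\delta$ reduces the argument to a one-point check against the infimum definition of $d(\delta^*)$.
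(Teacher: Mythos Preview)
Your argument is correct and is precisely the standard proof from \cite{lian2020_40}, which is what the paper defers to (the paper omits the proof entirely, writing only that it ``follows from the similar lines given in \cite{lian2020_40}''). One cosmetic caveat: Lemma~\ref{lemma3} as restated in this paper says $d$ is merely ``decreasing'' on $[1,\delta_0]$, whereas you use strict decrease to obtain $d(\delta)>J(v)$ on $(1,\delta_2)$; in the original \cite{lian2020_40} the monotonicity is strict, and in any case the hypothesis that $\delta_1<1<\delta_2$ are \emph{the} two roots of $d(\delta)=J(v)$ already excludes a third root in the open interval, so your contradiction at $\delta^*$ goes through.
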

\begin{theorem}[Invariant sets](Cf. \cite{halder2022_decay}, Theorem 2.1)\label{invariant}
	Let $v_0 \in H^1_0({{U}})$ and  $0<\eta<d$. Assume that $\delta_1<\delta_2$ are the two roots of equation $d(\delta)=\eta$, then:\medskip  \\
	$(i)$ All solutions to problem \eqref{main} with $0<J(v_0)\leq \eta$ belong to $W_\delta$ for $\delta_1<\delta<\delta_2$, provided $I(v_0)>0$ or $\|\nabla v_0\|=0$.
	\medskip \\
	$(ii)$ All solutions to problem \eqref{main} with $0<J(v_0)\leq \eta$ belong to $V_\delta$ for $\delta_1<\delta<\delta_2$, provided $I(v_0)<0$. 
\end{theorem}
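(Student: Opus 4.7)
The strategy is to combine the energy identity \eqref{energy_inequality} with the unimodal shape of the depth function from Lemma~\ref{lemma3} and the sign-stability Lemma~\ref{lemma_inv}, and then to propagate the initial sign of $I_\delta(v(\cdot,t))$ along the flow by a continuity-in-time argument.

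First I would record the geometry. Lemma~\ref{lemma3} says $d$ is strictly increasing on $(0,1]$ with maximum $d(1)=d$ and strictly decreasing on $[1,\delta_0]$ with $d(\delta_0)=0$; consequently, for $0<\eta<d$ the equation $d(\delta)=\eta$ admits exactly two roots, with $\delta_1\in(0,1)$ and $\delta_2\in(1,\delta_0)$, and $d(\delta)>\eta$ throughout $(\delta_1,\delta_2)$. Using \eqref{energy_inequality}, we obtain
\[
J(v(\cdot,t))\le J(v_0)\le\eta<d(\delta),\qquad t\in[0,T),\ \delta\in(\delta_1,\delta_2),
\]
so the potential-level condition in the definitions of both $W_\delta$ and $V_\delta$ is automatic, and only the sign of $I_\delta(v(\cdot,t))$ needs to be tracked.

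For part (i), if $\|\nabla v_0\|=0$ then $v\equiv 0\in W_\delta$ by uniqueness, so I may assume $I(v_0)>0$. Since $0<J(v_0)\le\eta<d$, the two roots $\tilde\delta_1\le\delta_1<1<\delta_2\le\tilde\delta_2$ of $d(\delta)=J(v_0)$ enclose $(\delta_1,\delta_2)$, and Lemma~\ref{lemma_inv} propagates the sign $I(v_0)>0$ to $I_\delta(v_0)>0$ on the full interval $(\tilde\delta_1,\tilde\delta_2)$, hence on $(\delta_1,\delta_2)$. Fixing such a $\delta$ and supposing for contradiction that there is a first time $t^*\in(0,T)$ with $I_\delta(v(\cdot,t^*))=0$ (which exists, if at all, by continuity of $t\mapsto I_\delta(v(\cdot,t))$ — a consequence of $v\in L^\infty(0,T;H^1_0)$ and $v_t\in L^2(0,T;L^2)$), two cases arise: either $\|\nabla v(\cdot,t^*)\|\ne 0$, in which case $v(\cdot,t^*)\in\mathcal N_\delta$ and \eqref{d_delta} forces $J(v(\cdot,t^*))\ge d(\delta)>\eta$, contradicting the energy bound; or $\|\nabla v(\cdot,t^*)\|=0$, in which case $v(\cdot,t^*)=0$ and uniqueness pins $v\equiv 0$ thereafter, still within $W_\delta$.

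Part (ii) is analogous but cleaner, since the alternative $\|\nabla v\|=0$ cannot occur. Lemma~\ref{lemma_inv} now yields $I_\delta(v_0)<0$ on $(\delta_1,\delta_2)$, and Lemma~\ref{lemma2}(ii) supplies the strictly positive lower bound $\|\nabla v_0\|>r(\delta)>0$. At a hypothetical first zero $t^*$ of $I_\delta(v(\cdot,\cdot))$, Lemma~\ref{lemma2}(ii) applied for every $t<t^*$ combined with continuity gives $\|\nabla v(\cdot,t^*)\|\ge r(\delta)>0$, so $v(\cdot,t^*)\in\mathcal N_\delta$ and again $J(v(\cdot,t^*))\ge d(\delta)>\eta$ collides with the energy bound. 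The main technical points I expect to treat carefully are the continuity of $t\mapsto I_\delta(v(\cdot,t))$ and the borderline scenario in (i) where the trajectory could in principle graze zero — disposed of by uniqueness of weak solutions to \eqref{main}; beyond this the scheme is a standard application of the potential-well machinery.
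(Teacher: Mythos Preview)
Your proposal is correct and follows exactly the standard potential-well invariance argument that the paper tacitly invokes: the paper does not supply its own proof here but defers to \cite{lian2020_40} and \cite{chen2015_258}, and your combination of the energy identity \eqref{energy_inequality}, the unimodality of $d(\delta)$ from Lemma~\ref{lemma3}, the sign-propagation Lemma~\ref{lemma_inv}, and a first-hitting-time contradiction via the definition \eqref{d_delta} of $d(\delta)$ is precisely that scheme. The only cosmetic point is that in part~(i) your ``for contradiction'' framing is not strictly a contradiction in the second subcase ($\|\nabla v(\cdot,t^\ast)\|=0$); you handle it correctly by observing $0\in W_\delta$, but it reads more cleanly if phrased as a dichotomy rather than a reductio.
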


\begin{proposition}(Cf. \cite{chen2015_258}, Theorem 2.5)
	\label{sign_I}
	Let $v_0\in H_0^1({{U}})$, $0<J(v_0)\leq d$ and $v(x,t)$ is a weak solution to the problem \ref{main}. Then we have the following conclusions:
	\begin{enumerate}
		\item If $I(v_0)>0$ then $I(v)>0,\ \quad \forall t\in [0,T)$
		\medskip
		\item If $I(v_0)<0$ then $I(v)<0,\ \quad \forall t\in [0,T),$
	\end{enumerate}
	where $T$ is the maximal existence time of $v$. 
\end{proposition}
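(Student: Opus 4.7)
The plan is a first-crossing argument built on the energy identity~\eqref{energy_inequality} and the variational characterization $d = \inf_{w\in\mathcal{N}} J(w)$. The key observation is that whenever $I(v(\cdot,t)) = 0$ and $v(\cdot,t)\not\equiv 0$, the solution lies on the Nehari manifold, so $J(v(\cdot,t)) \geq d$, which clashes with the energy monotonicity unless $v$ has been stationary up to time $t$. As preliminaries, the regularity $v\in L^\infty(0,T;H^1_0(U))$ with $v_t\in L^2(0,T;H^1_0(U))$ makes $t\mapsto v(\cdot,t)$ continuous into $H^1_0(U)$, and the subcritical growth of $|s|^{1+p}\log|s|$ renders $t\mapsto I(v(\cdot,t))$ continuous on $[0,T)$. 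Moreover, \eqref{energy_inequality} yields $J(v(\cdot,t)) \leq J(v_0) \leq d$ for every $t\in[0,T)$.

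For claim~(1), assume $I(v_0) > 0$ and suppose for contradiction that there is some $t\in(0,T)$ with $I(v(\cdot,t)) \leq 0$. Set $t_0 := \inf\{t \in (0,T) : I(v(\cdot,t)) \leq 0\}$; by continuity $I(v(\cdot,t_0)) = 0$ and $I(v(\cdot,t)) > 0$ on $[0,t_0)$. Granting $v(\cdot,t_0)\not\equiv 0$ (the alternative is excluded by well-posedness of the pseudo-parabolic Cauchy problem: if $v(\cdot,t_0)=0$, reversing time from $t_0$ forces $v\equiv 0$, contradicting $v_0\neq 0$), we have $v(\cdot,t_0)\in\mathcal{N}$ and hence $J(v(\cdot,t_0)) \geq d$. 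Then the chain
\[
d \leq J(v(\cdot,t_0)) = J(v_0) - \int_0^{t_0}\|v_t(\cdot,\tau)\|^2_{H^1_0(U)}\,d\tau \leq J(v_0) \leq d
\]
forces $\int_0^{t_0}\|v_t\|^2_{H^1_0(U)}\,d\tau = 0$, so $v\equiv v_0$ on $[0,t_0]$ and $0 = I(v(\cdot,t_0)) = I(v_0) > 0$, a contradiction.

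Claim~(2) is symmetric and cleaner: assume $I(v_0) < 0$ and let $t_0$ be the first zero of $t\mapsto I(v(\cdot,t))$. On $[0,t_0)$, where $I$ is strictly negative, Lemma~\ref{lemma2}(ii) with $\delta=1$ gives $\|\nabla v(\cdot,t)\| > r(1)$; passing to the limit yields $\|\nabla v(\cdot,t_0)\| \geq r(1) > 0$, so $v(\cdot,t_0)\in\mathcal{N}$. The same energy chain as above then forces $v \equiv v_0$ on $[0,t_0]$ and produces the contradiction $I(v_0) = 0$.

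The main obstacle I anticipate is the edge case $v(\cdot,t_0)\equiv 0$ in claim~(1): Lemma~\ref{lemma2}(i) only says $I$ is positive for small $\|\nabla v\|\neq 0$, which is compatible with (rather than excluding) a drop to zero, so ruling this out requires a separate well-posedness/uniqueness argument for the pseudo-parabolic equation. In claim~(2) this issue does not arise, because Lemma~\ref{lemma2}(ii) automatically keeps $\|\nabla v\|$ bounded uniformly below as long as $I<0$.
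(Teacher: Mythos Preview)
The paper does not give its own proof of this proposition, deferring instead to \cite{chen2015_258} and \cite{lian2020_40}; your first-crossing argument combined with the Nehari characterization $J\geq d$ on $\mathcal{N}$ and the energy identity~\eqref{energy_inequality} is precisely the standard route taken in those references, and your treatment of the borderline case $J(v_0)=d$ (forcing $\int_0^{t_0}\|v_t\|_{H^1_0}^2\,d\tau=0$ and hence $v\equiv v_0$) is correct. One small imprecision worth fixing: since the paper explicitly allows arbitrary $p>1$ and $n\geq 1$, the nonlinearity is not in general Sobolev-subcritical, so the continuity of $t\mapsto I(v(\cdot,t))$ should be justified through the assumed weak-solution regularity $v\in C([0,T);H^1_0(U))$ together with the well-definedness of the integrals in the paper's framework (cf.\ the estimate~\eqref{est_log_term}), rather than by an appeal to ``subcritical growth''.
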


\noindent The proof's of the above results follows from the similar lines given in \cite{lian2020_40} and \cite{chen2015_258}. So we omit the details.''

%
\section{Global existence}
In this section, we prove global existence of solution to \eqref{main} when $I(u_0) \geq 0$.  We use the Galarkin method to find a sequence of approximate solution to \eqref{main}. Using compactness arguments, we can find a subsequence of this approximate solution whose limit turns out to be a global solution to \eqref{main}. The details are given in the following theorems.

\begin{theorem}\label{global_exis}
	Let $v_0\in H_0^1({{U}})$. Assume $I(v_0)\geq 0$ and $J(v_0)\leq d$, then there exists a  global weak solution $v$ to problem \eqref{main} with $v\in L^{\infty}(0,\infty;H_0^1({{U}}))$ and $v_t \in L^{\infty}(0,\infty;L^2({{U}}))$.
\end{theorem}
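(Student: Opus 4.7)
The plan is to construct a global weak solution via the Faedo--Galerkin method. Let $\{w_j\}_{j=1}^\infty \subset H_0^1(U) \cap C^\infty(\overline{U})$ be the orthonormal basis of $L^2(U)$ consisting of Dirichlet eigenfunctions of $-\Delta$; these are simultaneously orthogonal in $H_0^1(U)$. For each $m \ge 1$, I seek an approximate solution of the form $v_m(x,t) = \sum_{j=1}^{m} g_{jm}(t)\, w_j(x)$ satisfying
$$(v_{mt}, w_j)_2 + (\nabla v_{mt}, \nabla w_j)_2 + (\nabla v_m, \nabla w_j)_2 = (v_m |v_m|^{p-1}\log|v_m|,\, w_j)_2, \quad j=1,\dots,m,$$
with $v_m(\cdot,0)$ equal to the $H_0^1$-projection of $v_0$ onto $\mathrm{span}\{w_1,\dots,w_m\}$. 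Since the left-hand side determines an invertible linear map on the coefficients $g_{jm}'$, this is an ODE system whose right-hand side is locally Lipschitz in $(g_{1m},\dots,g_{mm})$ (the logarithmic singularity at $s=0$ is integrable and smooth test functions render the map smooth), so standard ODE theory yields a local solution on some maximal interval.

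To extend globally, I multiply the $j$-th equation by $g_{jm}'(t)$, sum in $j$, and integrate in time to obtain
$$\int_0^t \|v_{m\tau}(\cdot,\tau)\|_{H_0^1(U)}^2\, d\tau + J(v_m(\cdot,t)) = J(v_m(\cdot,0)).$$
As $v_m(\cdot,0) \to v_0$ strongly in $H_0^1(U)$, both $J(v_m(\cdot,0)) \to J(v_0) \le d$ and $I(v_m(\cdot,0)) \to I(v_0) \ge 0$. Thus, for $m$ large, the initial data lie in $\overline{W}$. Applying Proposition \ref{sign_I} (sign preservation of $I$) together with Theorem \ref{invariant} (invariance of the family $W_\delta$), one concludes that $I(v_m(\cdot,t)) \ge 0$ is maintained for all $t$ in the life-span. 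Invoking the decomposition \eqref{combo} then gives
$$\tfrac{p-1}{2(p+1)}\|\nabla v_m(\cdot,t)\|^2 + \tfrac{1}{(p+1)^2}\|v_m(\cdot,t)\|_{p+1}^{p+1} \le J(v_m(\cdot,t)) \le J(v_m(\cdot,0)),$$
producing uniform bounds on $v_m$ in $L^\infty(0,\infty;H_0^1(U)) \cap L^\infty(0,\infty;L^{p+1}(U))$ and on $v_{mt}$ in $L^2(0,\infty;H_0^1(U))$. These a priori bounds rule out finite-time blow-up of the ODE system and hence give global-in-time approximate solutions.

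The main hurdle is passing to the limit in the nonlinear term $f_m := v_m|v_m|^{p-1}\log|v_m|$. Here I would exploit the elementary inequality $|s|^p|\log|s|| \le C_\epsilon(|s|^{p-\epsilon} + |s|^{p+\epsilon})$ for any $\epsilon>0$, choosing $\epsilon$ small enough that the Sobolev embedding (together with the $L^\infty_t L^{p+1}_x$ bound obtained above) yields a uniform $L^q(U\times(0,T))$-bound on $f_m$ for some $q>1$. The Aubin--Lions lemma, applied with the $L^\infty_t H_0^1$-bound on $v_m$ and the $L^2_t L^2_x$-bound on $v_{mt}$, gives strong convergence $v_m \to v$ in $L^2(0,T;L^2(U))$ and thus pointwise a.e.\ along a subsequence. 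Since $s \mapsto s|s|^{p-1}\log|s|$ is continuous (with the convention $0\cdot \log 0 = 0$), $f_m \to v|v|^{p-1}\log|v|$ pointwise a.e., and Vitali's convergence theorem (justified by the uniform $L^q$ bound) upgrades this to strong convergence in $L^1(U\times (0,T))$. Combined with weak-$*$ convergence in $L^\infty(0,\infty;H_0^1)$ and weak convergence of $v_{mt}$ in $L^2(0,\infty;H_0^1)$, one passes to the limit in the Galerkin formulation against any test function $w \in \mathrm{span}\{w_j\}$, and by density the resulting identity extends to all $w \in H_0^1(U)$, giving the required weak solution.

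The subtlest point will be the critical case $J(v_0)=d$, where the initial data may lie on the boundary of $W$ and Theorem \ref{invariant} does not apply directly with $\eta = d$. I would handle this by a scaling argument: by Lemma \ref{lemma1}, the map $\beta \mapsto J(\beta v_0)$ is strictly increasing on $[0,\beta^*]$ with $J(\beta^* v_0) \ge d$ and $I(\beta v_0) > 0$ for $\beta \in (0,\beta^*)$, so the data $v_0^{(k)} := (1-1/k)v_0$ satisfy $J(v_0^{(k)})<d$ and $I(v_0^{(k)})>0$. Applying the subcritical case just proven yields global solutions $v^{(k)}$ with uniform-in-$k$ bounds of the same form; extracting a weakly-$*$ convergent subsequence and repeating the Aubin--Lions/Vitali argument of the previous paragraph produces a global weak solution for the original data $v_0$.
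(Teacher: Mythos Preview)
Your argument is correct and follows the same Galerkin--potential-well--compactness strategy as the paper, including the identical scaling trick $v_0^{(k)}=(1-\tfrac1k)v_0$ for the critical case $J(v_0)=d$. The only differences are cosmetic: the paper establishes $v_m(\cdot,t)\in W$ by a direct boundary-touching argument (whereas you invoke Proposition~\ref{sign_I} and Theorem~\ref{invariant}, which are stated for weak solutions but transfer verbatim to the Galerkin system via the same energy identity), and it bounds the nonlinearity in $L^\gamma$ with the explicit exponent $\gamma=\tfrac{2p+2}{2p+1}$ using only the $L^{p+1}$ energy bound, then asserts weak-$*$ convergence of the nonlinear term rather than your more explicit Aubin--Lions/Vitali passage.
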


\begin{proof}
	We divide the proof into two cases. In Case 1, we consider the situation where $I(v_0)\geq 0$, and $J(v_0)<d$ and the next case deals with $J(v_0)=d$ and $I(v_0)\geq 0$. \medskip  \\
	\textbf{Case $1$:} $I(v_0)\geq 0$, $J(v_0)<d$. \medskip\\
	\noindent We have the following possibilities:
	\begin{enumerate}
		\item[(a)] Suppose $I(v_0)\geq 0$ and $J(v_0)<0$. Then we get a contradiction to \eqref{combo}. Therefore this situation does not arise.\medskip
		\item[(b)] Assume that $I(v_0)\geq 0$ and $J(v_0)=0$. From \eqref{combo} it readily implies that $v_0=0$. Thus $v\equiv 0$ is a global solution to \eqref{main}.\medskip
		\item[(c)] If $0<J(v_0)<d$ and  $I(v_0)=0$, then we have $J(v_0)\geq d$ (from the definition of $d$) which is a contradiction.\medskip
	\end{enumerate}
\noindent Hence the remaining possibility in this case is $I(v_0)> 0$ and $0<J(v_0)<d$ and we focus only  on this possibility. Let $\{\psi_i\}$ be an orthogonal basis of $H_0^1({{U}})$. Let $V_m={\rm span}\{ \psi_1,\psi_2,...,\psi_m \}$ and the projection of the initial data on $V_m$ be given by
\[
v_m(x,0) =  \sum\limits_{j=1}^m a_j\psi_i(x),
\]
$i.e.,$
\begin{equation}\label{sequence_v_0}
v_m(.,0) \rightarrow v_0(.),\ \ \ {\rm in}\ H_0^1({{U}}).
\end{equation}
We now seek a sequence of functions $(v_m)$ of the form
\[
v_m(x,t) = \sum\limits_{j=1}^m q_j^m(t)\psi_j(x),\ \ m=1,2,3,...,
\]
where each $v_m$ satisfies the following approximated problem
\begin{equation}\label{app_sol}
\left\{
\begin{aligned}
&\Big( \frac{\partial v_{m}}{\partial t},\psi \Big)_{2} + \Big( \frac{\partial \nabla v_{m}}{\partial t} ,\nabla \psi \Big)_{2} + \big(\nabla v_{m},\nabla \psi \big)_{2} = \big( v_{m}|v_m|^{p-1}\log|v_m|,\psi \big)_{2},\  \psi\in V_m,
\medskip \\
&v_m(x,0) =  \sum\limits_{j=1}^m (v_0,\psi_j)\psi_j, \medskip \\
& q_j^m(0)=a_j.
\end{aligned}
\right.
\end{equation}
Observe that \eqref{app_sol} is a system of nonlinear ordinary differential equations (ODEs) for the unknown functions $q_j^m(t)$. Using the standard existence theory of ODEs, we obtain
\[
q_j:[0,t_m) \rightarrow \mathbb{R},\ \ \ \ j=1,2,3,...,m, 
\]
and $q_j$ satisfy \eqref{app_sol} in a maximal interval $[0,t_m)$, where $t_m\in (0,T]$.  We next show that for sufficiently large $m$, $v_m(.,t)\in W$ for $\ 0<t<t_m.$ For, we first notice that $v_m(.,0)\in W$ for sufficiently large $m$. If $v_m(.,\tilde{t})\in \partial W$ for some $\tilde{t}$, then we have either $v_m(.,\tilde{t})\in \mathcal{N}$ or $J(v_m(.,\tilde{t})) = d$. In both cases we get $J(v_m(.,\tilde{t}))\geq d$, which is contradiction. This shows that there exists $M\in \mathbb{N}$ such that $v_m(.,t)\in W$ for $m\geq M,\ t\in [0,t_m)$.\\
On the other hand, on multiplying equation \eqref{app_sol} with $q_j'(t)$, taking summation from $1$ to $m$, and integrating over $t$, we get
\begin{equation}
\label{int_formulation}
\displaystyle\int_0^t \Big[ \|\frac{\partial v_{m}}{\partial t}\|^2 + \|\frac{\partial \nabla v_{m}}{\partial t}\|^2 \Big] dt + J(v_m(.,t))=J(v_m(.,0)),\ m\in \mathbb{N},\ t\in (0,t_m).
\end{equation}
This readily implies $J(v_m(.,t))< d$. Note that
\begin{equation*}
\begin{aligned}
J(v_m(.,t))&= \frac{1}{2}\|\nabla v_m\|^2- \frac{1}{1+p}\int\limits_{{{U}}}|v_m|^{1+p}\log |v_m| dx+\frac{1}{(1+p)^2}\|v_m\|_{1+p}^{1+p}
\\
&\geq \frac{1}{1+p}I(v_m) + \Big( - \frac{1}{1+p} + \frac{1}{2} \Big) \| \nabla v_m \| +\frac{1}{(1+p)^2}\|v_m\|_{1+p}^{1+p} 
\\
&\geq \frac{p-1}{2p+2} \|\nabla v_m \| +\frac{1}{(1+p)^2}\|v_m\|_{1+p}^{1+p}.
\end{aligned}
\end{equation*}
Hence we find
\begin{equation}
\label{est_grad_u}
	\|\nabla v_m\| < \frac{2(1+p)}{p-1}J(v_m)\leq  \frac{2(1+p)d}{p-1},
\end{equation}
and
\begin{equation}
\label{est_norm_u_p}
\|v_m\|_{1+p}^{1+p}  \leq (1+p)^2J(v_m) \leq (1+p)^2d.
\end{equation}
From \eqref{int_formulation} and using $J(v_m)>0$, we have
\[
\int_0^t \Big[ \|\frac{\partial v_{m}}{\partial t}\|^2 + \|\frac{\partial \nabla v_{m}}{\partial t}\|^2 \Big] dt  < d.
\]
So the sequence of approximate solutions is uniformly bounded and it is independent of $m$ and $t$. Thus, we can extend $q_m$ to $[0,T)$. Let $\gamma = \frac{2p+2}{2p+1}$, observe that 
\begin{equation}
	\label{est_log_term}
	\begin{aligned}
		\int_{{{U}}}\big(|v_m|^{p}|\log|v_m\|\big)^{\gamma}dx
		&=  \Bigg[ \int\limits_{\{x\in {{U}}:\ v_m \leq 1\}} + \int\limits_{\{x\in {{U}}:\ v_m> 1\}} \Bigg] \big(|v_m|^{p}|\log|v_m\|\big)^{\gamma}dx, \medskip \\
		&\leq (ep)^{-\gamma}|{{U}}| + 2^{\gamma}\int_{{{U}}} |v_m|^{\gamma(p+\frac{1}{2})}dx, \medskip \\
		&= (ep)^{-\gamma}|{{U}}| + 2^{\gamma}\int_{{{U}}} |v_m|^{1+p}dx, \medskip \\
		& \leq (ep)^{-\gamma}|{{U}}| + 2^{\gamma}(1+p)^2d.
	\end{aligned}
\end{equation}
where the last inequality follows from \eqref{est_norm_u_p}.
 Moreover, we obtain
\begin{equation}\label{uni_bdd}
\left\{
\begin{aligned}
&v_m\  {\rm is\ uniformly\ bounded\ in}\ L^{\infty}((0,T);H_0^1({{U}})),
\\ 
& \frac{\partial v_{m}}{\partial t}\ {\rm is\ uniformly\ bounded\ in}\ L^{2}((0,T);H^1_0({{U}})),
\\
& v_m|v_m|^{p-1}\log|v_m| {\rm is\ uniformly\ bounded\ in }\ L^{\infty}((0,T);L^{\gamma}({{U}})).
\end{aligned}
\right.
\end{equation}
From \eqref{uni_bdd}, it is easy to see that there exists a subsequence of $(v_m)$ which is still denoted by $(v_m)$ and $v$ such that
\begin{equation}
\label{log_term}
\left\{
\begin{aligned}
	&v_m \xrightarrow{w^*} v\ {\rm in}\ L^{\infty}((0,T);H_0^1({{U}})), 
	\\
	&\frac{\partial v_m}{\partial t} \xrightarrow{w^*} \frac{\partial v}{\partial t}\ {\rm in}\ L^{2}((0,T);H^1_0({{U}})),
	\\
	&v_m|v_m|^{p-1}\log|v_m| \xrightarrow{w^*} v|v|^{p-1}\log|v|\ {\rm in}\ L^{\infty}((0,T);L^{\gamma}({{U}})).
\end{aligned}
\right.
\end{equation}

In view of \eqref{log_term} and using the standard arguments, we can show that $v$ satisfies the weak formualtion \eqref{app_sol}. Repeating the same argument from $[T,2T],\ [2T,3T],....$ we obtain that $v$ is a global weak solution  to \eqref{main}, and $v \in W$. \medskip \\
\textbf{Case $2$:} Denote $v_{0m}=\mu_m v_0$ where $\mu_m=1-\frac{1}{m},\ m\geq 2$. Consider problem \eqref{main} with initial data $v(x,0)= v_{0m}(x)$ i.e.,
\begin{equation}\label{main_approx}
	\left\{
	\begin{aligned}
		&w_t-\Delta w_t -\Delta w=w|w|^{p-1}\log|w|, && t\in \mathbb{R}^+,\ x \in {{U}}, 
		\\
		&w(x,t)=0, && t\in \mathbb{R}^+,\ x \in \partial {{U}}, 
		\\
		&w(x,0)=v_{0m}(x), && x \in {{U}},
	\end{aligned}
	\right.
\end{equation}
 Now observe that 
\[
J(v_{0m}) = J(\mu_m v_0) < J(v_0) =d \ \ \ {\rm and}\ \ I(v_{0m})>0.
\]
Following the similar steps from Case $1$, there exists a global solution to \eqref{main_approx}, say $w_m$. Moreover, we have
\begin{equation}
\label{int_formulation1}
\displaystyle\int_0^t \Big[ \|\frac{\partial w_{m}}{\partial t}\|^2 + \|\frac{\partial \nabla w_{m}}{\partial t}\| \Big] dt + J(w_m(.,t))=J(w_{m}(.,0))<d.
\end{equation}
Also, by the invariance of $W$, we get $I(w_m(.,t))>0$. Therefore there exists a subsequence of ${w_m},$ which is still denoted with ${w_m},$ such that 
$$w_m \xrightarrow{w^*} w \ {\rm in}\ L^{\infty}((0,T);H_0^1({{U}})),$$
$$ \frac{\partial w_m}{\partial t} \xrightarrow{w^*} \frac{\partial w}{\partial t}\ {\rm in}\ L^{2}((0,T);H^1_0({{U}})),$$
for each $T>0$. Using the arguments presented in Case 1, we conclude that $w\in W$ is a global weak solution to \eqref{main}. Hence the theorem is proved.
\end{proof}
Next we discus the global existence of the solutions to \eqref{main} under the following restriction on $p$ and dimension $n$
\begin{equation}\label{p_restriction}
	\begin{aligned}
		1<p< 
		\begin{cases}
			\infty, & \text{if } n \leq 2,\\
			\frac{4}{n-2}, & \text{if } 3 \leq n \leq 5.
		\end{cases}
	\end{aligned}
\end{equation}
For, we define \\
\medskip
\centerline{$\mathcal{N}_\alpha=\{v\in \mathcal{N} : J(v)< \alpha\},$}
\\ \medskip
\centerline{$\Lambda_\alpha= \inf\limits_{v \in \mathcal{N}_\alpha} \|v\|^2_{H_0^1({{U}})},$}
\\ \medskip
where $\alpha>d$.
\begin{remark}
	For any $\alpha> d$, if power index $p$ satisfies \eqref{p_restriction} and $\|\nabla v\| \neq 0$, then $\Lambda_\alpha >0$.
\end{remark}
\begin{proof}
	Let $v \in \mathcal{N}_\alpha$ and $\|\nabla v\| \neq 0$. From the definition of $I$, we get
	\begin{align}
		\|\nabla v\|^2=&\int\limits_{{{U}}}|v|^{1+p}\log |v| dx \nonumber
		\\
		=& \|v\|^{p+2}_{p+2} \nonumber
		\\
		=& C \| \nabla v \|^{p+2}, \label{new_bound_alpha}
	\end{align}
	where $C$ is the Sobolev embedding constant. From \eqref{new_bound_alpha}, one can easily get $\|v \|^2_{H_0^1({{U}})} >(\frac{1}{C})^{\frac{2}{p}}$. This completes the proof.
\end{proof}
\begin{theorem}\label{global_exis_super_critical}
	Let $v_0\in H_0^1({{U}})$ and power index $p$ satisfies $\eqref{p_restriction}$. For a given $\alpha >d$, assume that  $J(v_0) < \alpha$, $\|v_0 \|^2_{H_0^1({{U}})} < \Lambda_\alpha$ and $I(v_0) > 0$, then there exists a  global weak solution $v$ to problem \eqref{main} with $v\in L^{\infty}(0,\infty;H_0^1({{U}}))$ and $v_t \in L^{\infty}(0,\infty;L^2({{U}}))$.
\end{theorem}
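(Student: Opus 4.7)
The plan is to rerun the Galerkin procedure of Case~1 in \thref{global_exis}, replacing the potential-well invariance $v_m(t)\in W$ by invariance of the supercritical set
\[
\mathcal{M}_\alpha := \bigl\{v \in H^1_0(U) : J(v) < \alpha,\ \|v\|^2_{H^1_0(U)} < \Lambda_\alpha,\ I(v) > 0\bigr\}.
\]
The Remark above ensures $\Lambda_\alpha>0$, so $\mathcal{M}_\alpha$ is a nontrivial open set; the strong convergence $v_m(\cdot,0)\to v_0$ in $H^1_0(U)$, combined with continuity of $J$, $I$ and $\|\cdot\|_{H^1_0}$ (using the Sobolev embedding guaranteed by \eqref{p_restriction}), places $v_m(\cdot,0)$ in $\mathcal{M}_\alpha$ for all sufficiently large $m$. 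Energy conservation \eqref{int_formulation} immediately gives $J(v_m(\cdot,t))\le J(v_m(\cdot,0))<\alpha$ throughout the interval of existence, so only the conditions $I(v_m)>0$ and $\|v_m\|^2_{H^1_0}<\Lambda_\alpha$ need to be propagated in time.

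The decisive step, which I expect to be the main obstacle, is the following monotonicity-based invariance. Testing the Galerkin equation \eqref{app_sol} with $\psi=v_m(\cdot,t)$ (admissible since $v_m(\cdot,t)\in V_m$) yields the identity
\[
\tfrac{1}{2}\,\tfrac{d}{dt}\|v_m(\cdot,t)\|^2_{H^1_0(U)} \;=\; -I(v_m(\cdot,t)),
\]
so $\|v_m(\cdot,t)\|^2_{H^1_0(U)}$ is strictly decreasing on any interval where $I(v_m)>0$. Suppose, by contradiction, that $t_0\in(0,t_m)$ is the first instant with $I(v_m(\cdot,t_0))=0$. If $\|\nabla v_m(\cdot,t_0)\|=0$ then $v_m(\cdot,t_0)\equiv 0$ and the solution is trivially extended by zero; otherwise $v_m(\cdot,t_0)\in\mathcal{N}$, and the energy bound forces $J(v_m(\cdot,t_0))<\alpha$, so $v_m(\cdot,t_0)\in\mathcal{N}_\alpha$ and the definition of $\Lambda_\alpha$ gives $\|v_m(\cdot,t_0)\|^2_{H^1_0(U)}\ge\Lambda_\alpha$. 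On the other hand, the monotonicity identity forces $\|v_m(\cdot,t_0)\|^2_{H^1_0(U)}<\|v_m(\cdot,0)\|^2_{H^1_0(U)}<\Lambda_\alpha$, a contradiction. Hence $I(v_m(\cdot,t))>0$ on $[0,t_m)$ and, by the same identity, $\|v_m(\cdot,t)\|^2_{H^1_0(U)}<\Lambda_\alpha$ is preserved.

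With $I(v_m)>0$ in hand, the decomposition \eqref{combo} together with $J(v_m)<\alpha$ yields the uniform bounds
\[
\|\nabla v_m(\cdot,t)\|^2 < \tfrac{2(1+p)\alpha}{p-1},\qquad \|v_m(\cdot,t)\|_{1+p}^{1+p} < (1+p)^2\alpha,
\]
and the computation \eqref{est_log_term}, with $d$ replaced by $\alpha$, provides a uniform $L^\infty(0,T;L^\gamma(U))$ bound on $v_m|v_m|^{p-1}\log|v_m|$. These are precisely the uniform bounds exploited in the last part of the Case~1 proof of \thref{global_exis}, so the extraction of a weakly-$\ast$ converging subsequence and the identification of its limit as a global weak solution $v\in L^\infty(0,\infty;H^1_0(U))$ with $v_t\in L^\infty(0,\infty;L^2(U))$ proceed exactly as there, completing the argument.
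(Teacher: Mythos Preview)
Your proposal is correct and follows essentially the same route as the paper: run the Galerkin scheme of Theorem~\ref{global_exis}, use the identity $\tfrac{d}{dt}\|v_m\|_{H^1_0}^2=-2I(v_m)$ to show $\|v_m\|_{H^1_0}^2$ is decreasing while $I(v_m)>0$, and obtain a contradiction at the first zero of $I$ via the definition of $\Lambda_\alpha$ combined with the energy bound $J(v_m(\cdot,\tilde t))<\alpha$. Your write-up is in fact somewhat more careful than the paper's, since you explicitly treat the degenerate case $\|\nabla v_m(\cdot,t_0)\|=0$ and spell out precisely how the contradiction with $\Lambda_\alpha$ arises.
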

\begin{proof}
	Using the same argument employed in Theorem \ref{global_exis}, we obtain a sequence $v_m$ which satisfies \eqref{app_sol}. From \eqref{sequence_v_0}, it is clear $I(v_m(\cdot,0)) >0$, for sufficiently large $m$. Next we show $I(v_m(\cdot,t)) >0$, for sufficiently large $m$. On the contrary, we assume that there exists a $\tilde{t}>0$ such that  $I(v_m(\cdot,\tilde{t})) =0$ and  $I(v_m(\cdot,t)) >0$, $0 \leq t < \tilde{t}$.
	From the fact
	\begin{equation}\label{global_U_derivative}
		\frac{d}{dt} \|v_m(\cdot, t)\|_{H^1_0({{U}})}^2=-2I(v_m(\cdot, t)),
	\end{equation}
	we deduce that $ t \mapsto \|v_m(\cdot, t)\|_{H^1_0({{U}})}^2$ is a decreasing function, for $0 \leq t < \tilde{t}$. Hence, we conclude that 
	\begin{equation}\label{inequality_lambda}
		\|v_m(\cdot, \tilde{t})\|_{H^1_0({{U}})}^2 <\|v_m(\cdot, 0)\|_{H^1_0({{U}})}^2<\Lambda_\alpha.
	\end{equation}
	On the other hand,  \eqref{energy_inequality} give us $J(v_m(\cdot, \tilde{t})) < J(v_m(\cdot, 0))$, which is a contradiction to \eqref{inequality_lambda}. Therefore for sufficiently large $m$, we obtain $I(v_m(\cdot,t)) >0, t\geq 0$. 
	The rest of the proof follows along the same lines of that Theorem \ref{global_exis} and the  only difference is $d$ needs to be replaced by $\alpha$.
\end{proof}
\section{Decay estimate}
In this section, we prove few decay estimates of the global  solutions to \eqref{main} in the $H_0^1$-norm whenever $J(v_0)\leq d$, $I(v_0) >0$. As in the proof of Theorem \ref{them1}, we consider the cases $0<J(v_0) <d$, $I(v_0) > 0$; $J(v_0)=d$, $I(v_0) >0$ separately.
We begin with the subcritical case, i.e., $0<J(v_0)<d$, $I(v_0)>0$. In this case we take advantage of the invariance of the family of potential wells described in Theorem \ref{invariant} to prove the exponential decay of solutions to \eqref{main}  in the $H^1_0$ norm.
\begin{theorem}\label{them1}
	Assume that  $I(v_0)>0$, $J(v_0)<d$ and $v$ is a a global solution to \eqref{main}.  Then  there exist constants $\mu >0$ and $C>0$  such that
	\begin{equation}\label{decay}
	\|v(\cdot,t)\|_{H^1_0({{U}})}\leq C e^{-\mu t}, \quad 0 \leq t < \infty.
	\end{equation}
\end{theorem}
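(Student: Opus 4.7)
The plan is to combine the invariance of the family of potential wells $\{W_\delta\}$ with the identity obtained by testing \eqref{main} against $v$. Concretely, I will fix a parameter $\delta^*\in(\delta_1,1)$, use Lemma~\ref{lemma3} and Theorem~\ref{invariant} to obtain a uniform pointwise lower bound of the form $I(v(t))\ge (1-\delta^*)\|\nabla v(t)\|^2$ for all $t\ge 0$, and then feed this into the identity $\tfrac{d}{dt}\|v\|_{H_0^1({{U}})}^2=-2I(v)$ to produce a linear differential inequality that drives exponential decay.

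First, since $0<J(v_0)<d$, Lemma~\ref{lemma3} supplies two roots $\delta_1<1<\delta_2$ of $d(\delta)=J(v_0)$, and Theorem~\ref{invariant}(i) yields $v(\cdot,t)\in W_\delta$ for every $\delta\in(\delta_1,\delta_2)$ and every $t\ge 0$. Picking any $\delta^*\in(\delta_1,1)$, the inclusion $v(t)\in W_{\delta^*}$ gives either $v(t)=0$ or $I_{\delta^*}(v(t))>0$; in either case
\[
\int_{{{U}}}|v(t)|^{1+p}\log|v(t)|\,dx \le \delta^*\|\nabla v(t)\|^2,
\]
and subtracting this from $\|\nabla v(t)\|^2$ produces $I(v(t))\ge (1-\delta^*)\|\nabla v(t)\|^2$ for all $t\ge 0$.

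Next, I choose $w=v$ in the weak formulation \eqref{weaksolution}; this is legitimate since the energy identity \eqref{energy_inequality} together with $J(v(t))>0$ on $W$ yields $v_t\in L^2((0,\infty);H_0^1({{U}}))$, making $t\mapsto\|v(t)\|^2_{H_0^1({{U}})}$ absolutely continuous. This gives
\[
\tfrac12\tfrac{d}{dt}\|v(t)\|^2_{H_0^1({{U}})}=\int_{{{U}}}|v|^{1+p}\log|v|\,dx-\|\nabla v\|^2=-I(v(t)).
\]
Combining with the bound from the previous paragraph and the Poincaré inequality $\|v\|^2\le C_P\|\nabla v\|^2$, so that $\|v\|^2_{H_0^1({{U}})}\le (1+C_P)\|\nabla v\|^2$, I obtain
\[
\tfrac{d}{dt}\|v(t)\|^2_{H_0^1({{U}})}\le -\frac{2(1-\delta^*)}{1+C_P}\|v(t)\|^2_{H_0^1({{U}})},
\]
and Gronwall's inequality yields \eqref{decay} with $\mu=(1-\delta^*)/(1+C_P)>0$ and $C=\|v_0\|_{H_0^1({{U}})}$.

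The main mathematical point is the use of a parameter strictly less than $1$: the single-well bound $I_1(v(t))>0$ gives only $I(v(t))>0$ and no quantitative lower bound, whereas working in $W_{\delta^*}$ with $\delta^*<1$ produces the positive coefficient $1-\delta^*$ that fuels the exponential rate. Technically, the only care required is in justifying the testing identity for a merely weak solution; this is routine by performing the computation first on the Galerkin approximations $v_m$ from the proof of Theorem~\ref{global_exis} and then passing to the limit using the weak-$*$ convergences established there.
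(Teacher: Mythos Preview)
Your proposal is correct and follows essentially the same route as the paper: both invoke the invariance of the family of potential wells (Theorem~\ref{invariant}) to obtain $I_{\delta^*}(v(t))>0$ for some $\delta^*<1$, rewrite this as $I(v(t))\ge (1-\delta^*)\|\nabla v(t)\|^2$, combine with the identity $\tfrac{d}{dt}\|v\|_{H_0^1}^2=-2I(v)$ and Poincar\'e, and conclude by Gronwall. If anything, your handling of the admissible range of $\delta$ (fixing $\delta^*\in(\delta_1,1)$ rather than writing ``for $0<\delta<1$'') and your remark on justifying the testing identity via the Galerkin approximants are slightly more careful than the paper's presentation.
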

\begin{proof}
	From the  proof of Theorem \ref{global_exis}, we get $I(v(\cdot, t))>0$, and  $0< J(v(\cdot, t))<d$. Therefore from Theorem \ref{invariant}, we deduce $I_\delta (v(\cdot, t)) >0$ for $0<\delta<1$. Set $\beta=1-\delta$ and observe that
	\begin{equation*}
	\int\limits_{{{U}}} |v(x,t)|^{p+1} \log |v(x,t)| dx<(1-\beta)\|\nabla v(\cdot, t)\|^2,
	\end{equation*}
	or
	\begin{equation}\label{loguu}
	\beta \|\nabla v(\cdot, t)\|^2 < I(v(\cdot, t)).
	\end{equation}
	On the other hand, it follows immediately that
	\begin{equation}\label{U_derivative}
	\frac{d}{dt} \|v(\cdot, t)\|_{H^1_0({{U}})}^2=-2I(v(\cdot, t)).
	\end{equation}
	Now using \eqref{loguu}--\eqref{U_derivative},  we obtain
	\begin{equation}\label{u_p}
	\begin{aligned}
	\frac{d}{dt} \|v(\cdot, t)\|^2_{H^1_0({{U}})}=-2I(v(\cdot, t))<-2 \beta \|\nabla v(\cdot, t)\|^2 \leq -2\beta  \frac{\lambda_1}{1+\lambda_1} \|v(\cdot, t)\|^2_{H^1_0({{U}})},
	\end{aligned}
	\end{equation}
	where $\lambda_1$ is the optimal  constant in the Poincar\'e inequality. Finally, Gronwall's lemma gives
	\begin{equation*}
	\|v(\cdot, t)\|_{H^1_0({{U}})} \leq  \|v_0\|_{H^1_0({{U}})} e^{-\mu t},\ t \geq 0,
	\end{equation*}
	where $\mu= {\beta  \frac{\lambda_1}{1+\lambda_1}}$. This completes the proof.
\end{proof}
We now turn our attention towards the critical case, i.e., $J(v_0)=d$, $I(v_0)>0$.
\begin{theorem}\label{them3}
	Assume   $I(v_0)\geq 0$, $J(v_0)=d$, and $v$ is a global solution to \eqref{main}. Then  there exist constants $C>0$, $\mu >0$ such that
	\begin{equation}\label{decay_2nd}
	\|v(\cdot,t)\|_{H^1_0({{U}})}\leq C e^{-\mu t}, \quad 0 \leq t < \infty.
	\end{equation}
\end{theorem}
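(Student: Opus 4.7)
The plan is to reduce the critical-level hypothesis $J(v_0)=d$ to the already-treated subcritical situation of Theorem~\ref{them1} by exploiting energy dissipation on an arbitrarily short initial interval. Concretely, I aim to exhibit a time $t_0>0$ at which $J(v(\cdot,t_0))<d$ and $I(v(\cdot,t_0))>0$, and then apply Theorem~\ref{them1} to the problem shifted in time.

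The mechanism is the energy identity \eqref{energy_inequality}, which at the critical level reads
\begin{equation*}
\int_0^t \|v_\tau(\cdot,\tau)\|_{H^1_0({{U}})}^2\, d\tau + J(v(\cdot,t)) = J(v_0) = d, \quad t\in[0,T).
\end{equation*}
The only way $J(v(\cdot,t))$ can fail to drop strictly below $d$ on every interval $[0,t_0]$ is for $v_t$ to vanish identically there, in which case $v\equiv v_0$ and \eqref{main} collapses to $-\Delta v_0 = v_0|v_0|^{p-1}\log|v_0|$; testing against $v_0$ forces $I(v_0)=0$. Under the nontrivial hypothesis $I(v_0)>0$ this is impossible, so for every small $t_0>0$ we have $\int_0^{t_0}\|v_\tau\|_{H^1_0({{U}})}^2\,d\tau>0$ and hence $J(v(\cdot,t_0))<d$. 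Proposition~\ref{sign_I} guarantees $I(v(\cdot,t))>0$ throughout $[0,T)$, so in particular $I(v(\cdot,t_0))>0$.

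Having entered the subcritical regime at time $t_0$, I would apply Theorem~\ref{them1} to the translated initial value problem with initial datum $v(\cdot,t_0)$. This yields constants $\tilde{C},\mu>0$ (the latter of the form $\beta\lambda_1/(1+\lambda_1)$ with $\beta\in(0,1)$ determined by Theorem~\ref{invariant}) such that
\begin{equation*}
\|v(\cdot,t)\|_{H^1_0({{U}})} \leq \|v(\cdot,t_0)\|_{H^1_0({{U}})}\, e^{-\mu(t-t_0)}, \quad t\geq t_0.
\end{equation*}
For $t\in[0,t_0]$ a uniform bound on $\|v(\cdot,t)\|_{H^1_0({{U}})}$ is available from the a priori estimates inherited from the global existence argument in Theorem~\ref{global_exis}. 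Combining the two ranges and absorbing $e^{\mu t_0}$ into a single constant $C$ yields \eqref{decay_2nd}.

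The main obstacle I anticipate is cleanly treating the borderline configuration $I(v_0)=0$ with $\|\nabla v_0\|\neq 0$: by the computation above, such a $v_0$ must satisfy $-\Delta v_0 = v_0|v_0|^{p-1}\log|v_0|$ and, since $J(v_0)=d$, it is a ground state on $\mathcal{N}$; the corresponding stationary solution $v\equiv v_0$ cannot decay to zero, so the decay statement must be understood as restricted to the case $I(v_0)>0$ (the remaining sub-case $\|\nabla v_0\|=0$ being trivial). Once this dichotomy at $t=0$ is settled, the rest of the argument is a routine time-translation combined with the exponential decay already obtained in Theorem~\ref{them1}.
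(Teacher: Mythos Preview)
Your proposal is correct and follows essentially the same route as the paper: use the energy identity \eqref{energy_inequality} to drop $J$ strictly below $d$ at some positive time, retain $I>0$, and then invoke Theorem~\ref{them1} on the time-shifted problem. The paper argues strict energy dissipation slightly differently---it computes $(v_t,v)_2+(\nabla v_t,\nabla v)_2=-I(v)<0$ directly, which immediately forces $\|v_t\|_{H^1_0({{U}})}>0$---and instead of invoking Proposition~\ref{sign_I} it re-derives sign preservation of $I$ by a contradiction argument (its ``Case~2''), but the substance is the same. Your observation about the borderline configuration $I(v_0)=0$ with $\|\nabla v_0\|\neq 0$ is well taken: the paper's two cases both begin from $I(v_0)>0$ and never explicitly treat this ground-state sub-case, so your reading that the decay claim should be understood under $I(v_0)>0$ (with the remaining sub-case $\|\nabla v_0\|=0$ trivial) matches what the paper actually proves.
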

\begin{proof}
	From the proof of Theorem \ref{global_exis}, it follows that  $I(v(\cdot,t)) \geq 0$ for $0 \leq t < \infty$. We complete the proof  by considering following two cases.\\
	\textbf{Case 1.}  Assume that $I(v(\cdot, t))>0$ for $t\geq 0$. Then from the relation
	$$(v_t, v)+(\nabla v_t, \nabla v)_2=-I(v(\cdot, t))<0,$$
 it follows that $\|v_t\|_{H^1_0({{U}})}>0$ and $\int_0^t \|v_t (\cdot, \tau)\|^2_{H^1_0({{U}})} d\tau$ is strictly increasing  in $[0,\infty)$. Therefore from  \eqref{energy_inequality}, we get
	\begin{equation}
	J(v(\cdot,t_1))=-\int_0^{t_1} \|v_t(\cdot, \tau)\|^2_{H^1_0({{U}})} d \tau+J(v_0)<d.
	\end{equation}
	Using the arguments that are employed in the proof of the decay estimate in Theorem  \ref{them1}, it is easy to obtain the exponential decay \eqref{decay_2nd}.\\
	\textbf{Case 2.} Let if possible there exists  $t_1>0$ such that $I(v(\cdot, t))>0$ for $0 \leq t< t_1$ and $I(v(\cdot,t_1))=0$. Now two possibilities can arise, they are: $(i)$ $\| \nabla v(\cdot,t_1)\|=0$, $(ii)$ $\| \nabla v(\cdot,t_1)\|> 0$.\\ We now prove that $\|\nabla v(\cdot,t_1)\|> 0$ can not hold. For, it is enough to show that if $\| \nabla v(\cdot,t_1)\|> 0$ then $I(v(\cdot, t_1))>0$.\\
	\textbf{Claim.} If $\|\nabla v(\cdot,t_1)\| >0$ then $I(v(\cdot,t_1))>0$.\\
	For $0\leq t<t_1$, since $(v_t, v)+(\nabla v_t, \nabla v)_2=-I(v(\cdot, t))$, it follows that $t \mapsto \int_0^t \|v_t\|^2_{H^1_0({{U}})} dt$ is strictly increasing.  Owing to \eqref{energy_inequality}, we get
	\begin{equation}\label{new12}
	J(v(\cdot,t_1))=-\int_0^{t_1} \|v_t(\cdot, \tau)\|^2_{H^1_0({{U}})} d \tau+J(v_0)<d.
	\end{equation}
	Since $\|\nabla v(\cdot,t_1)\| >0$ and $I(v(\cdot,t_1))=0$, from the definition of $d$ we get $J(v(\cdot,t_1)) \geq d$, which is contradiction to  \eqref{new12}. This proves Claim.\\
	Therefore we have $\|\nabla v(\cdot, t_1)\| =0$. Hence one can easily deduce that $v$ satisfies   \eqref{decay_2nd}.
\end{proof}
\section{Finite time blowup}
In this section, we prove some blowup results when $I(v_0)<0$. In fact, we use different tools to prove that the solutions exhibits finite time blow up depending on the value of $J(v_0)$.
\begin{theorem}\thlabel{them2}
	Let $v_0 \in H_0^1({{U}})$. Assume  $I(v_0)<0$ and $J(v_0)<d$, then the weak solution to problem \eqref{main} blows up in finite time, $i.e.$, there exists $ T>0$ such that $$\lim\limits_{t \to T^{-}} \|v\|_{H^1_0({{U}})}=\infty.$$
\end{theorem}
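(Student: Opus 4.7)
The plan is to argue by contradiction in the standard spirit of Levine's concavity method. Suppose the weak solution $v$ is global on $[0,\infty)$. Using Proposition \ref{sign_I} together with the hypotheses $I(v_0)<0$, $J(v_0)<d$, we first obtain $I(v(\cdot,t))<0$ for every $t\geq 0$. Lemma \ref{lemma3} supplies $\delta_1<1<\delta_2$ with $d(\delta_1)=d(\delta_2)=J(v_0)$, and Theorem \ref{invariant} then places $v(\cdot,t)\in V_\delta$ for every $\delta\in(\delta_1,\delta_2)$; combined with \thref{lemma2} this gives the quantitative lower bound $\|\nabla v(\cdot,t)\|>r(\delta)$ and $-I(v(\cdot,t))>(\delta-1)r^{2}(\delta)$, uniform in $t$. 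Complementing this, the combo identity \eqref{combo} and the energy identity \eqref{energy_inequality} yield the sharper lower bound
\begin{equation*}
-I(v(\cdot,t))\ \geq\ \tfrac{p-1}{2}\|\nabla v(\cdot,t)\|^{2}-(p+1)J(v_0)+(p+1)\int_{0}^{t}\|v_t(\cdot,\tau)\|_{H^{1}_{0}(U)}^{2}\,d\tau,
\end{equation*}
which is what drives the concavity estimate below.

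Next I introduce the Levine functional
\begin{equation*}
M(t)=\int_{0}^{t}\|v(\cdot,\tau)\|_{H^{1}_{0}(U)}^{2}\,d\tau+(T_0-t)\|v_0\|_{H^{1}_{0}(U)}^{2}+\beta(t+\sigma)^{2},
\end{equation*}
with parameters $T_0,\beta,\sigma>0$ fixed momentarily. Testing the equation against $v$ gives $\tfrac12\tfrac{d}{d\tau}\|v\|_{H^{1}_{0}}^{2}=(v_t,v)_{2}+(\nabla v_t,\nabla v)_{2}=-I(v)$, hence
\begin{equation*}
M'(t)=2\!\int_{0}^{t}\!\bigl[(v_t,v)_{2}+(\nabla v_t,\nabla v)_{2}\bigr]d\tau+2\beta(t+\sigma),\qquad M''(t)=-2I(v(\cdot,t))+2\beta.
\end{equation*}
I then aim to establish
\begin{equation*}
M(t)\,M''(t)-\tfrac{p+3}{4}\bigl(M'(t)\bigr)^{2}\ \geq\ 0,\qquad t\in[0,T_0].
\end{equation*}
For this, Cauchy--Schwarz in the $H^{1}_{0}$ inner product together with the Schwarz inequality on $\mathbb{R}$ gives
\begin{equation*}
\bigl(M'(t)\bigr)^{2}\ \leq\ 4\Bigl[\!\int_{0}^{t}\!\|v\|_{H^{1}_{0}}^{2}d\tau+\beta(t+\sigma)^{2}\Bigr]\Bigl[\!\int_{0}^{t}\!\|v_t\|_{H^{1}_{0}}^{2}d\tau+\beta\Bigr],
\end{equation*}
while the lower bound for $-I$ recorded above yields
\begin{equation*}
M''(t)\ \geq\ 2(p+1)\!\int_{0}^{t}\!\|v_t\|_{H^{1}_{0}}^{2}d\tau+(p-1)\|\nabla v\|^{2}-2(p+1)J(v_0)+2\beta.
\end{equation*}
Choosing $\beta=(p+1)\bigl(d-J(v_0)\bigr)>0$ absorbs the $-2(p+1)J(v_0)$ term into a favorable constant, and then choosing $\sigma$ large and $T_0$ appropriately balances the $(T_0-t)\|v_0\|_{H^{1}_{0}}^{2}$ term against the cross contribution $4\beta(t+\sigma)\int_{0}^{t}(v_t,v)_{H^{1}_{0}}d\tau$ inside $(M')^{2}$. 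A routine algebraic manipulation then delivers the displayed concavity inequality.

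The inequality $M\,M''\geq\tfrac{p+3}{4}(M')^{2}$ is equivalent to $\bigl(M^{-(p-1)/4}\bigr)''\leq 0$, so $M^{-(p-1)/4}$ is a positive concave function. Since $M(0)=T_0\|v_0\|_{H^{1}_{0}}^{2}+\beta\sigma^{2}>0$ and $M'(0)=2\beta\sigma>0$, this concave positive quantity has strictly negative derivative at $t=0$ and must therefore vanish at some finite $T^{*}\leq\frac{4M(0)}{(p-1)M'(0)}$; equivalently $M(t)\to\infty$ as $t\to T^{*-}$. Because $(T_0-t)\|v_0\|_{H^{1}_{0}}^{2}+\beta(t+\sigma)^{2}$ is bounded on $[0,T_0]$, this forces $\int_{0}^{t}\|v(\cdot,\tau)\|_{H^{1}_{0}}^{2}d\tau\to\infty$, hence $\|v(\cdot,t)\|_{H^{1}_{0}(U)}\to\infty$ in finite time, contradicting the standing assumption that $v$ is global. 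The main obstacle is precisely the engineering of $(\beta,\sigma,T_0)$ in Step 2: the cross term $4\beta(t+\sigma)\int_{0}^{t}(v_t,v)_{H^{1}_{0}}d\tau$ inside $(M')^{2}$ must be swallowed by $2\beta M(t)$ appearing in $M\,M''$, and it is exactly at this point that the strict gap $d-J(v_0)>0$ is consumed; once $\beta$ is fixed from this gap, the remaining parameters are chosen diagonally to make all sign terms work.
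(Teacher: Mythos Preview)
Your strategy and the paper's both rest on Levine's concavity method, but the implementations diverge. The paper works with the bare functional $N(t)=\int_{0}^{t}\|v\|_{H^{1}_{0}}^{2}\,d\tau$, splits into the cases $J(v_0)\le 0$ and $0<J(v_0)<d$, and shows $N\ddot N-\tfrac{p+1}{2}\dot N^{2}>0$ only for sufficiently large $t$ (using, in the second case, the invariance of $V_{\delta}$ to force $\ddot N(t)\ge 2(\delta_2-1)r^{2}(\delta_2)>0$, hence polynomial growth of $N,\dot N$). You instead load the functional with the extra terms $(T_0-t)\|v_0\|_{H^{1}_{0}}^{2}+\beta(t+\sigma)^{2}$ and aim for the inequality on all of $[0,T_0]$. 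Either route can be pushed through, but two genuine gaps remain in your sketch.

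First, your opening invocations of Proposition~\ref{sign_I} and Theorem~\ref{invariant} both require $0<J(v_0)$; neither covers $J(v_0)\le 0$, which is part of the hypothesis $J(v_0)<d$. The paper handles this regime separately (and more simply) in its Case~1; you need an analogous split, since for $J(v_0)\le 0$ there are no roots $\delta_1<1<\delta_2$ of $d(\delta)=J(v_0)$ to speak of.

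Second, the step you label ``routine algebraic manipulation'' is precisely where the work lies, and your prescribed $\beta=(p+1)\bigl(d-J(v_0)\bigr)$ does not close it. After your Cauchy--Schwarz bound $(M')^{2}\le 4(A+C)(B+\beta)$ with $A=\int_{0}^{t}\|v\|_{H^{1}_{0}}^{2}$, $B=\int_{0}^{t}\|v_t\|_{H^{1}_{0}}^{2}$, $C=\beta(t+\sigma)^{2}$, the concavity inequality with exponent $\tfrac{p+3}{4}$ reduces (using $M\ge A+C$) to $M''\ge (p+3)(B+\beta)$, i.e.
\[
(p-1)B+(p-1)\|\nabla v\|^{2}\ \ge\ 2(p+1)J(v_0)+(p+1)\beta .
\]
At $t=0$ this forces $(p-1)\|\nabla v_0\|^{2}\ge 2(p+1)J(v_0)+(p+1)\beta$. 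Even granting the sharpest available lower bound $(p-1)\|\nabla v\|^{2}>2(p+1)d$ (which would come from $\|\nabla v\|>r(1)$ together with the identity in Lemma~\ref{lemma3}(i) at $\delta=1$), one only gets $\beta\le 2\bigl(d-J(v_0)\bigr)$, strictly smaller than your choice since $p>1$. So either the exponent or the value of $\beta$ must be adjusted, and in any case the constant lower bound on $\|\nabla v\|$ has to be fed in explicitly; it is not absorbed by ``choosing $\sigma$ large and $T_0$ appropriately''. The paper sidesteps this bookkeeping entirely by asking only for the concavity inequality at large times, which is why its argument, though it needs the case split, is in fact shorter.
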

\begin{proof}
	Let $v(x,t)$ be any solution to problem \eqref{main} with $I(v_0)<0$ and $J(v_0)<d$.\\
	Define the function $N \colon [0,\infty) \to \mathbb{R}^+$  by $N(t)=\int\limits_0^t \|v\|_{H^1_0({{U}})}^2 d\tau $. Then an easy computation yields 
	\begin{equation}\label{m_derivative}
		\dot{N}(t)=\|v\|_{H^1_0({{U}})}^2,\ \ddot{N}(t)=-2I(v).
	\end{equation}
	From \eqref{combo},  \eqref{energy_inequality}, and the Poincar\'{e} inequality  there exists $ \lambda_1 >0$ such that 
	\begin{equation}\label{mdd}
		\begin{aligned}
			\ddot{N}(t)&=-2(1+p)J(v)+(p-1)\|\nabla v\|^2+\frac{2}{1+p}\|v\|^{1+p}_{1+p}
			\\
			&\geq 2(1+p)\int\limits_0^t \|v_\tau\|^2 d \tau +(p-1)\frac{\lambda_1}{\lambda_1+1}\dot{N}(t)-2(1+p)J(v_0).
		\end{aligned}
	\end{equation}
	Since 
	\begin{equation}\label{mint}
		\begin{aligned}
			\left(\int\limits_0^t (v_t, v)_{H^1_0({{U}})} d \tau \right)^2&=\left(\frac{1}{2}\int\limits_0^t \frac{d}{d t} \|v\|^2_{H^1_0({{U}})} d\tau \right)^2
			\\
			&=\frac{1}{4}\left( \dot{N}^2(t)-2\|v_0\|^2_{H^1_0({{U}})} \dot{N}(t)+\|v_0\|_{H^1_0({{U}})}^4\right),
		\end{aligned}
	\end{equation}
	we obtain 
	\begin{equation*}
		\begin{aligned}
			N\ddot{N}- \Big(\frac{1+p}{2}\Big)\dot{N}^2 &\geq (p-1)\frac{\lambda_1}{\lambda_1+1} N \dot{N}-2(1+p)J(v_0)N-(1+p)\|v_0\|^2_{H^1_0({{U}})} \dot{N}+\frac{1+p}{2}\|v_0\|^4_{H^1_0({{U}})}
			\\
			&\quad \quad + 2(1+p) \left\{ \int\limits_0^t \|v\|^2_{H^1_0({{U}})} d \tau \int\limits_0^t \|v_t\|^2_{H^1_0({{U}})} d \tau -\left(\int\limits_0^t (v_t, v)_{H^1_0({{U}})} d \tau\right)^2\right\}.
		\end{aligned}
	\end{equation*}
	By H\"older's inequality, we get 
	\begin{equation}\label{bode}
		\begin{aligned}
			N\ddot{N}- \frac{1+p}{2}\dot{N}^2 &\geq (p-1)\frac{\lambda_1}{\lambda_1+1}N \dot{N}
			-2(1+p)J(v_0)N-(1+p)\|v_0\|^2_{H^1_0({{U}})} \dot{N}.
		\end{aligned}
	\end{equation}
	\textbf{Claim:} For large $t$, it follows that
	\begin{equation}\label{ode}
	N\ddot{N}- \Big(\frac{1+p}{2}\Big)\dot{N}^2>0.
	\end{equation}
	To prove this claim we consider two cases and discuss separately.\\
	\textbf{Case-1:} Assume  $J(v_0)\leq 0.$ From \eqref{m_derivative} and \eqref{mdd}, we get 
	$\ddot{N} \geq 0$. Since, $\dot{N}(t)=\|v\|^2_{H^1_0({{U}})}\geq 0$, then  there exists $t_0 \geq 0$ such that $\dot{N}(t_0)>0$ and 
	$$N(t)\geq {N}(t_0)+\dot{N}(t_0)(t-t_0)> \dot{N}(t_0)(t-t_0), \quad t\geq t_0.$$
	Thus for $t$ large, we have $(p-1)\lambda_1 N>(1+p)\|v_0\|_{H^1_0({{U}})}^2$ and \eqref{ode} holds.
	\\
	\textbf{Case-2:} Assume that $0<J(v_0)<d.$ 
	From Theorem \ref{invariant}, we get $v(\cdot,t) \in V_\delta $ for $1\leq \delta<\delta_2$ and $t>0$, where $\delta_2$ is the same as the one introduced in  Theorem \ref{invariant}. Thus $I_{\delta}(v(\cdot,t))<0$,  for $1\leq \delta<\delta_2,$ $ t \geq 0$. Next we prove  that $\|\nabla v \|^2 > \frac{\lambda_1}{1+\lambda_1}\|v_0\|^2_{H^1_0({{U}})}>0,\ t \geq 0$. For, since $\frac{d}{dt}\|v\|_{H^1_0({{U}})}^2=-2I(v)>0$, we obtain $t \mapsto \|v\|_{H^1_0({{U}})}^2, \ t \geq 0$ is a strictly increasing function. On the other hand, the Poincar\'e inequality  gives 
	\medskip \\
	\centerline{$\|\nabla v\|^2 \geq \frac{\lambda_1}{1+\lambda_1} \|v\|_{H^1_0({{U}})}^2> \frac{\lambda_1}{1+\lambda_1} \|v_0\|_{H^1_0({{U}})}^2.$}
	\medskip \\
	Therefore it is easy to get that $\|\nabla v \|^2 > \frac{\lambda_1}{1+\lambda_1}\|v_0\|^2_{H^1_0({{U}})}>0,\ t \geq 0$. Now from  \eqref{m_derivative} and the definition of $I_{\delta_2}$, we find that 
	\begin{equation*}
		\ddot{N}(t)\geq -2I_{\delta_2}(v)+2(\delta_2-1)\|\nabla v\|^2\geq2(\delta_2-1)r^2(\delta_2)> 0,
	\end{equation*}
	\begin{equation*}
		\dot{N}(t)\geq \dot{N}(0) +2(\delta_2-1)r^2(\delta_2)t\geq  2(\delta_2-1)r^2(\delta_2)t,
	\end{equation*}
	and 
	\begin{equation*}
		N(t)\geq {N}(0)+(\delta_2-1)r^2(\delta_2)t^2\geq  (\delta_2-1)r^2(\delta_2)t^2.
	\end{equation*}
	Thus for $t$ large, we have
	\begin{equation}\label{Ode_inequa}
		\left\{
		\begin{aligned}
			&(p-1)\frac{\lambda_1}{\lambda_1+1} N(t)>2(1+p)\|v_0\|_{H^1_0({{U}})}^2,
			\\
			&(p-1)\frac{\lambda_1}{\lambda_1+1}\dot{N}(t)>4(1+p)J(v_0).
		\end{aligned}
		\right.
	\end{equation}
	On substituting \eqref{Ode_inequa} in  \eqref{bode}, we get \eqref{ode} for sufficiently large $t$.\\
	On the other hand, a straightforward calculation gives
	\begin{equation*}
		\left(N^{-\alpha}\right)^{\prime\prime}	=-\alpha N^{-\alpha-2} \left(N\dot{N}-(\alpha+1)\dot{N}^2\right).
	\end{equation*}
	For $\alpha= \frac{p-1}{2}$, \eqref{ode} implies $\left(N^{-\alpha}\right)^{\prime\prime}<0$ for sufficiently large $t>0$. Hence, for $t >\tilde{t}$, we can write 
	\begin{equation*}
		N^{-\frac{p-1}{2}}(t) < N^{-\frac{p-1}{2}}(\tilde{t})\left(1-\left(\frac{p-1}{2}\right)\frac{\dot{N}(\tilde{t})}{N(\tilde{t})}(t-\tilde{t})\right),
	\end{equation*} 
	which implies that  there exists $ T>0$ such that 
	$$\lim\limits_{t \to T^{-}}N^{-\frac{p-1}{2}}(t)=0.$$
	This completes the proof.
\end{proof}
We now focus on the critical case, i.e., $J(v_0)=d$, $I(v_0) <0$.
\begin{theorem}\thlabel{them4}
	Let $v_0 \in H_0^1({{U}})$. Assume that  $I(v_0)<0$ and $J(v_0)=d$, then the weak solution $v$ to equation \eqref{main} blows up in finite time.
\end{theorem}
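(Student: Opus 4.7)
The plan is to reduce the critical case $J(v_0)=d$ to the already established subcritical blow-up statement in Theorem \ref{them2} by shifting the initial time slightly forward. The key is to verify that after an arbitrarily small delay, the conditions $J(\cdot)<d$ and $I(\cdot)<0$ simultaneously hold along the solution, after which Theorem \ref{them2} is directly applicable (by uniqueness/continuation of the weak solution, once the data at the shifted time satisfies those hypotheses, the future evolution is the same as the one started from that time).

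First I would establish that the Nehari sign is preserved: $I(v(\cdot,t))<0$ for all $t\in[0,T)$. Suppose on the contrary there is a smallest $t^{*}>0$ with $I(v(\cdot,t^{*}))=0$ and $I(v(\cdot,t))<0$ on $[0,t^{*})$. The identity
\begin{equation*}
\tfrac{d}{dt}\|v(\cdot,t)\|_{H^{1}_{0}(U)}^{2}=-2I(v(\cdot,t))
\end{equation*}
forces $\|v(\cdot,t)\|_{H^{1}_{0}(U)}^{2}$ to be strictly increasing on $[0,t^{*}]$, hence $\|\nabla v(\cdot,t^{*})\|>0$, so $v(\cdot,t^{*})\in\mathcal{N}$ and thus $J(v(\cdot,t^{*}))\ge d$. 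On the other hand the energy identity \eqref{energy_inequality} gives
\begin{equation*}
J(v(\cdot,t^{*}))=d-\int_{0}^{t^{*}}\|v_{\tau}\|_{H^{1}_{0}(U)}^{2}\,d\tau\le d.
\end{equation*}
If the integral is strictly positive we contradict $J(v(\cdot,t^{*}))\ge d$; if it vanishes then $v_{t}\equiv 0$ on $[0,t^{*}]$, so $v(\cdot,t^{*})=v_{0}$ and $I(v(\cdot,t^{*}))=I(v_{0})<0$, again a contradiction.

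Next I would pick any $t_{0}\in(0,T)$ and show $J(v(\cdot,t_{0}))<d$. Again from \eqref{energy_inequality} it suffices to rule out $v_{t}\equiv 0$ on $[0,t_{0}]$, but the same steady-state argument as above yields $I(v_{0})=0$, contradicting $I(v_{0})<0$. Hence $\int_{0}^{t_{0}}\|v_{\tau}\|_{H^{1}_{0}(U)}^{2}\,d\tau>0$, and therefore $J(v(\cdot,t_{0}))<d$. Combined with the previous step, $I(v(\cdot,t_{0}))<0$ as well.

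Finally I would apply Theorem \ref{them2} with initial datum $\tilde v_{0}:=v(\cdot,t_{0})$, which lies in $H^{1}_{0}(U)$ and satisfies $J(\tilde v_{0})<d$, $I(\tilde v_{0})<0$. By uniqueness the weak solution launched from $\tilde v_{0}$ agrees with $v(\cdot,t_{0}+\cdot)$, so it blows up in finite time, and consequently so does $v$ itself. The main technical obstacle is the first step: one must carefully rule out the degenerate alternative $v_{t}\equiv 0$ when comparing the energy identity against the Nehari constraint at $t^{*}$; the resolution relies on the fact that a stationary solution would have $I=0$, which is incompatible with the strict negativity of $I(v_{0})$.
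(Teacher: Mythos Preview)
Your proposal is correct and follows essentially the same route as the paper: shift the initial time forward to some $t_0>0$ where $J(v(\cdot,t_0))<d$ and $I(v(\cdot,t_0))<0$, then invoke Theorem~\ref{them2}. The paper's argument is marginally leaner (it uses only local continuity of $I$ to keep $I<0$ on a short interval rather than your global sign-preservation in Step~1), but your version is equally valid and handles the degenerate stationary alternative with more care.
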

\begin{proof}
	Assume that  $T$ is the existence time of $v$. We need to prove that $T <\infty$. From the continuity of $I(v)$ and  $J(v)$, there exists $t_1 \in (0,T)$ small enough such that $I(v(\cdot,t))<0$ and $J(v(\cdot,t))>0$ for $t\in [0,t_1]$. Therefore $\int\limits_0^t \|v_\tau\|_{H^1_0({{U}})}^2 d \tau $ is strictly increasing for $t\in [0,t_1]$. From \eqref{energy_inequality}, we can choose $t_1$ such that  
	\begin{equation}
		0<J(v(\cdot,t_1))=J(v_0)-\int\limits_0^{t_1} \|v_t\|_{H^1_0({{U}})}^2 d \tau<J(v_0)=d.
	\end{equation}
	If we take $t_1$ as the initial time in \thref{them2}, it follows that the maximal existence time $T$ of $v$ is finite, $i.e.,$ 
	$$\lim\limits_{t \to T^{-}} \|v(t)\|_{H^1_0({{U}})}=\infty.$$
	This completes the proof. 
\end{proof}
\noindent Finally, we discuss the finite time blow-up property when the initial energy is high.
\begin{theorem}
	Let $v_0 \in H^1_0({{U}})$. Moreover assume that the initial data satisfies\\
	$(i)$ $J(v_0)>0$, \medskip \\
	$(ii)$ $\|v_0\|_{H^1_0({{U}})}^2>\frac{2{(\lambda_1+1)}(1+p)}{\lambda_1(p-1)} J(v_0)$,\medskip \\
	$(iii)$ $I(v_0)<0$, \medskip \\
	where $\lambda_1$ is the  optimal  constant in the Poincar\'e inequality. Then the weak solution $v$ to equation \eqref{main} blows up in finite time.
\end{theorem}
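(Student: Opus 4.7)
The plan is to adapt the concavity argument of \thref{them2} to the supercritical energy regime. Since $J(v_0)$ may exceed $d$, the potential-well invariance furnished by Theorem \ref{invariant} and Proposition \ref{sign_I} is no longer applicable, so the first task is to build by hand a substitute that keeps $I(v(\cdot,t))$ negative throughout the existence interval $[0,T)$. Hypothesis (ii) is designed precisely for this role.

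\textit{Propagation of $I<0$.} Suppose, for contradiction, that $t_0\in(0,T)$ is the first time with $I(v(\cdot,t_0))=0$. Then \eqref{U_derivative} forces $t\mapsto\|v(\cdot,t)\|^2_{H^1_0({{U}})}$ to be strictly increasing on $[0,t_0]$, so $\|\nabla v(\cdot,t_0)\|>0$ and $\|v(\cdot,t_0)\|^2_{H^1_0({{U}})}>\|v_0\|^2_{H^1_0({{U}})}$. Substituting $I(v(\cdot,t_0))=0$ into \eqref{potential} and applying Poincar\'e's inequality yields
\begin{align*}
J(v(\cdot,t_0)) &\geq \tfrac{p-1}{2(p+1)}\|\nabla v(\cdot,t_0)\|^2 \\
&\geq \tfrac{(p-1)\lambda_1}{2(p+1)(\lambda_1+1)}\|v(\cdot,t_0)\|^2_{H^1_0({{U}})} \\
&> \tfrac{(p-1)\lambda_1}{2(p+1)(\lambda_1+1)}\|v_0\|^2_{H^1_0({{U}})} > J(v_0),
\end{align*}
where the final strict inequality is exactly hypothesis (ii) rewritten. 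This contradicts the energy conservation law $J(v(\cdot,t_0))\leq J(v_0)$, so $I(v(\cdot,t))<0$ on all of $[0,T)$.

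\textit{Concavity argument.} With $I(v)<0$ in hand, I would set $N(t)=\int_0^t\|v(\cdot,\tau)\|^2_{H^1_0({{U}})}\,d\tau$ and reproduce verbatim the derivation \eqref{m_derivative}--\eqref{bode}. Hypothesis (ii), equivalent to $(p-1)\tfrac{\lambda_1}{\lambda_1+1}\|v_0\|^2_{H^1_0({{U}})}>2(1+p)J(v_0)$, combined with the ODE-type estimate $\ddot{N}(t)\geq(p-1)\tfrac{\lambda_1}{\lambda_1+1}\dot{N}(t)-2(1+p)J(v_0)$, produces exponential growth of $\dot{N}$ by Gronwall and hence $\dot{N}(t),N(t)\to\infty$ on the existence interval. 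Splitting the cross term $(p-1)\tfrac{\lambda_1}{\lambda_1+1}N\dot{N}$ of \eqref{bode} in half then absorbs both lower-order linear terms for large $t$, giving $N\ddot{N}-\tfrac{1+p}{2}\dot{N}^2>0$. The concavity of $N^{-(p-1)/2}$ forces blow-up in finite time exactly as in \thref{them2}.

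The main obstacle is the first step. In \thref{them2} one appeals to Theorem \ref{invariant} to keep $v$ inside $V_\delta$ and $\|\nabla v\|$ bounded below, but at supercritical energy this invariance must be built from scratch. It is precisely condition (ii), with its sharp constant $\tfrac{2(\lambda_1+1)(1+p)}{\lambda_1(p-1)}$, that matches the threshold arising when one combines the expression of $J$ on the Nehari set $\{I=0\}$ with Poincar\'e's inequality; any weaker hypothesis would fail to generate the contradiction in Step 1.
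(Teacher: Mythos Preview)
Your proposal is correct and follows the same two-step strategy as the paper: first propagate $I(v(\cdot,t))<0$ by a contradiction argument that combines hypothesis~(ii), the identity \eqref{combo} on $\{I=0\}$, Poincar\'e's inequality, and energy conservation; then feed this into the concavity machinery of \thref{them2} via the auxiliary $N(t)$ and inequality \eqref{bode}. The only cosmetic difference is how you force $\dot N,N\to\infty$: you apply Gronwall to the differential inequality $\ddot N\geq(p-1)\tfrac{\lambda_1}{\lambda_1+1}\dot N-2(1+p)J(v_0)$ to obtain exponential growth, whereas the paper simply notes that $\ddot N=-2I(v)>0$ gives $\dot N(t)>\dot N(0)=\|v_0\|^2_{H^1_0}$ for all $t$ and hence linear growth of $N$; either route suffices to make the right-hand side of \eqref{bode} eventually positive.
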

\begin{proof}
	As in the proof of \thref{them2}, we work with the quantity $N(t)=\int\limits_0^t \|v\|_{H^1_0({{U}})}^2 d\tau $. We prove the theorem in two steps: $(i)$ $I(v)<0$ and $\| v(t)\|_{H^1_0({{U}})}^2 > \frac{2({\lambda_1+1})(1+p)}{\lambda_1(p-1)} J(v_0)$, for every $t \in (0, T)$, $(ii)$  for sufficiently large $t$, $\ddot{N}N-\frac{1+p}{2}\dot{N}^2>0$.\\
	\textbf{Step 1.} Let if possible, there exists $t_0 \in (0,T)$ such that $I(v(\cdot,t_0)) =0$ and $I(v(\cdot,t))<0$ for $0\leq t <t_0$. Again consider the function $N(t)$ as in  \thref{them2}. Since  $\ddot{N}(t)= - 2I(v)>0$ for $t\in [0,t_0)$,  $\dot{N}$ is increasing in $[0,t_0]$, we deduce that
	\begin{equation}\label{mt0}
		\dot{N}(t_0)>\dot{N}(0)=\|v_0\|_{H^1_0({{U}})}^2> \frac{2({\lambda_1+1})(1+p)}{\lambda_1(p-1)} J(v_0).
	\end{equation}
	Since $I(v(\cdot,t_0))=0$, the conservation of energy  gives 
	\begin{equation}
		\begin{aligned}
			J(v_0)&\geq J(v(\cdot,t_0))
			\\
			&=\frac{p-1}{2(1+p)}\|\nabla v(\cdot,t_0)\|^2+ \frac{1}{(1+p)^2}\|v(\cdot,t_0)\|_{1+p}^{1+p}
			\\
			&\geq \frac{p-1}{2(1+p)}\|\nabla v(\cdot,t_0)\|^2
			\\
			&\geq  \frac{\lambda_1 (p-1)}{2({\lambda_1+1})(1+p)}\| v(\cdot,t_0)\|_{H^1_0({{U}})}^2.
		\end{aligned}
	\end{equation}
	Hence we get $\dot{N}(t_0)=\|v (\cdot,t_0)\|^2 \leq \frac{2({\lambda_1+1})(1+p)}{\lambda_1 (p-1)} J(v_0)$, which is a contradiction to \eqref{mt0}. Thus we obtain
	\begin{equation*}
		I(v)<0, \  t\in (0,T),
	\end{equation*}
	and \eqref{mt0} holds for every $t \in (0,T)$.
	Hence $N$ is strictly increasing. Therefore for large $t$, we obtain
	\begin{equation}\label{mm}
		N(t)>\frac{2({\lambda_1+1})(1+p)}{\lambda_1 (p-1)} \|v_0\|_{H^1_0({{U}})}^2.
	\end{equation}
	\textbf{Step 2.} From \eqref{bode}, \eqref{mt0} and \eqref{mm}, we deduce that for sufficiently large $t$, 
	\begin{equation}
		\begin{aligned}
			\ddot{N}N-\frac{1+p}{2}\dot{N}^2
			&\geq (p-1)\frac{\lambda_1}{\lambda_1+1} N \dot{N}
			-2(1+p)J(v_0)N-(1+p)\|v_0\|_{H^1_0({{U}})}^2 \dot{N} >0.
		\end{aligned}
	\end{equation}
	Using the arguments employed in \thref{them2}, we conclude the proof of the theorem.
\end{proof}

\section*{Acknowledgements}
\noindent The first author would like to thank CSIR (award number: 09/414(1154)/2017-EMR-I) for providing the financial support for his research.
The third author is supported by Department of Science and Technology, India, under MATRICS (MTR/2019/000848).
\bibliographystyle{abbrv}
\bibliography{references}

\begin{thebibliography}{10}

\bibitem{ambrosetti1973_14}
A.~Ambrosetti and P.~H. Rabinowitz.
\newblock Dual variational methods in critical point theory and applications.
\newblock {\em J. Functional Analysis}, 14:349--381, 1973.

\bibitem{Barenblatt1960_24}
G.~I. Barenblatt, I.~P. Zheltov, and I.~Kochina.
\newblock Basic concepts in the theory of seepage of homogeneous liquids in
  fissured rocks [strata].
\newblock {\em Journal of applied mathematics and mechanics}, 24(5):1286--1303,
  1960.

\bibitem{Benjamin1972_272}
T.~B. Benjamin, J.~L. Bona, and J.~J. Mahony.
\newblock Model equations for long waves in nonlinear dispersive systems.
\newblock {\em Philosophical Transactions of the Royal Society of London.
  Series A, Mathematical and Physical Sciences}, 272(1220):47--78, 1972.

\bibitem{chen2012_3}
H.~Chen and G.~Liu.
\newblock Global existence and nonexistence for semilinear parabolic equations
  with conical degeneration.
\newblock {\em Journal of Pseudo-Differential Operators and Applications},
  3(3):329--349, 2012.

\bibitem{chen2015_422}
H.~Chen, P.~Luo, and G.~Liu.
\newblock Global solution and blow-up of a semilinear heat equation with
  logarithmic nonlinearity.
\newblock {\em J. Math. Anal. Appl.}, 422(1):84--98, 2015.

\bibitem{chen2015_258}
H.~Chen and S.~Tian.
\newblock Initial boundary value problem for a class of semilinear
  pseudo-parabolic equations with logarithmic nonlinearity.
\newblock {\em J. Differential Equations}, 258(12):4424--4442, 2015.

\bibitem{Chen1968_19}
P.~J. Chen and M.~E. Gurtin.
\newblock On a theory of heat conduction involving two temperatures.
\newblock {\em Zeitschrift f{\"u}r angewandte Mathematik und Physik ZAMP},
  19(4):614--627, 1968.

\bibitem{gazzola2005_18}
F.~Gazzola and T.~Weth.
\newblock Finite time blow-up and global solutions for semilinear parabolic
  equations with initial data at high energy level.
\newblock {\em Differential Integral Equations}, 18(9):961--990, 2005.

\bibitem{halder2022_decay}
J.~Halder and S.~K. Tumuluri.
\newblock Decay estimates and blow up of solutions to a class of heat
  equations.
\newblock {\em arXiv preprint arXiv:2201.04671}, 2022.

\bibitem{han2019_474}
Y.~Han.
\newblock Blow-up at infinity of solutions to a semilinear heat equation with
  logarithmic nonlinearity.
\newblock {\em Journal of Mathematical Analysis and Applications},
  474(1):513--517, 2019.

\bibitem{han2020_99}
Y.~Han.
\newblock Finite time blowup for a semilinear pseudo-parabolic equation with
  general nonlinearity.
\newblock {\em Applied Mathematics Letters}, 99:105986, 2020.

\bibitem{han2019_164}
Y.~Han, C.~Cao, and P.~Sun.
\newblock A {$p$}-{L}aplace equation with logarithmic nonlinearity at high
  initial energy level.
\newblock {\em Acta Appl. Math.}, 164:155--164, 2019.

\bibitem{hoshino1991_34}
H.~Hoshino and Y.~Yamada.
\newblock Solvability and smoothing effect for semilinear parabolic equations.
\newblock {\em Funkcial. Ekvac.}, 34(3):475--494, 1991.

\bibitem{chao2016_437}
C.~Ji and A.~Szulkin.
\newblock A logarithmic {S}chr\"{o}dinger equation with asymptotic conditions
  on the potential.
\newblock {\em J. Math. Anal. Appl.}, 437(1):241--254, 2016.

\bibitem{kaikina2007_2007}
E.~I. Kaikina.
\newblock Initial-boundary value problems for nonlinear pseudoparabolic
  equations in a critical case.
\newblock {\em Electronic Journal of Differential Equations}, 2007(109):1--25,
  2007.

\bibitem{kaikina2010_44}
E.~I. Kaikina, P.~I. Naumkin, and I.~A. Shishmarev.
\newblock Periodic boundary value problem for nonlinear sobolev-type equations.
\newblock {\em Functional Analysis and Its Applications}, 44(3):171--181, 2010.

\bibitem{korpusov2003_43}
M.~O. Korpusov and A.~G. Sveshnikov.
\newblock Three-dimensional nonlinear evolution equations of pseudoparabolic
  type in problems of mathematical physics.
\newblock {\em Zhurnal Vychislitel'noi Matematiki i Matematicheskoi Fiziki},
  43(12):1835--1869, 2003.

\bibitem{lian2020_40}
W.~Lian, M.~S. Ahmed, and X.~Runzhang.
\newblock Global existence and blow up of solution for semi-linear hyperbolic
  equation with the product of logarithmic and power-type nonlinearity.
\newblock {\em Opuscula Mathematica}, 40(1):111--130, 2020.

\bibitem{liu2018_2018}
G.~Liu and R.~Zhao.
\newblock Blow-up phenomena and lifespan for a quasi-linear pseudo-parabolic
  equation at arbitrary initial energy level.
\newblock {\em Boundary Value Problems}, 2018(1):1--10, 2018.

\bibitem{liu2018_274}
W.~Liu and J.~Yu.
\newblock A note on blow-up of solution for a class of semilinear
  pseudo-parabolic equations.
\newblock {\em Journal of Functional Analysis}, 274(5):1276--1283, 2018.

\bibitem{liu2020_28}
Y.~Liu and W.~Li.
\newblock A family of potential wells for a wave equation.
\newblock {\em Electron. Res. Arch.}, 28(2):807--820, 2020.

\bibitem{luo2015_38}
P.~Luo.
\newblock Blow-up phenomena for a pseudo-parabolic equation.
\newblock {\em Mathematical Methods in the Applied Sciences},
  38(12):2636--2641, 2015.

\bibitem{matahashi1978_22}
T.~Matahashi and M.~Tsutsumi.
\newblock On a periodic problem for pseudo-parabolic equations of
  sobolev-galpern type.
\newblock {\em Math. Japonica}, 22:535--553, 1978.

\bibitem{matahashi1979_22}
T.~Matahashi and M.~Tsutsumi.
\newblock Periodic solutions of semilinear pseudo-parabolic equations in
  hilbert space.
\newblock {\em Funkcialaj Ekvacioj}, 22(1):51--66, 1979.

\bibitem{padron2004_356}
V.~Padr{\'o}n.
\newblock Effect of aggregation on population recovery modeled by a
  forward-backward pseudoparabolic equation.
\newblock {\em Transactions of the American Mathematical Society},
  356(7):2739--2756, 2004.

\bibitem{payne1975_22}
L.~E. Payne and D.~H. Sattinger.
\newblock Saddle points and instability of nonlinear hyperbolic equations.
\newblock {\em Israel J. Math.}, 22(3-4):273--303, 1975.

\bibitem{sattinger1968_30}
D.~H. Sattinger.
\newblock On global solution of nonlinear hyperbolic equations.
\newblock {\em Arch. Rational Mech. Anal.}, 30:148--172, 1968.

\bibitem{stetter1973_23}
H.~J. Stetter.
\newblock {\em Analysis of discretization methods for ordinary differential
  equations}.
\newblock Springer-Verlag, New York-Heidelberg, 1973.
\newblock Springer Tracts in Natural Philosophy, Vol. 23.

\bibitem{Ting1963_14}
T.~W. Ting.
\newblock Certain non-steady flows of second-order fluids.
\newblock {\em Archive for Rational Mechanics and Analysis}, 14(1):1--26, 1963.

\bibitem{ting1969_21}
T.~W. Ting.
\newblock Parabolic and pseudo-parabolic partial differential equations.
\newblock {\em Journal of the Mathematical Society of Japan}, 21(3):440--453,
  1969.

\bibitem{tsutsumi1972_17}
M.~Tsutsumi.
\newblock On solutions of semilinear differential equations in a {H}ilbert
  space.
\newblock {\em Math. Japon.}, 17:173--193, 1972.

\bibitem{weissler1979_32}
F.~B. Weissler.
\newblock Semilinear evolution equations in {B}anach spaces.
\newblock {\em J. Functional Analysis}, 32(3):277--296, 1979.

\bibitem{xu2020}
G.~Xu.
\newblock Higher regularity and finite time blow-up to nonlocal
  pseudo-parabolic equation with conical degeneration.
\newblock {\em arXiv preprint arXiv:2008.02088}, 2020.

\bibitem{xu2016_270}
R.~Xu and Y.~Niu.
\newblock Addendum to “global existence and finite time blow-up for a class
  of semilinear pseudo-parabolic equations”[j. func. anal. 264 (12)(2013)
  2732--2763].
\newblock {\em Journal of Functional Analysis}, 270(10):4039--4041, 2016.

\bibitem{xu2013_264}
R.~Xu and J.~Su.
\newblock Global existence and finite time blow-up for a class of semilinear
  pseudo-parabolic equations.
\newblock {\em Journal of Functional Analysis}, 264(12):2732--2763, 2013.

\bibitem{yacheng2003_192}
L.~Yacheng.
\newblock On potential wells and vacuum isolating of solutions for semilinear
  wave equations.
\newblock {\em J. Differential Equations}, 192(1):155--169, 2003.

\bibitem{yacheng2006_64}
L.~Yacheng and Z.~Junsheng.
\newblock On potential wells and applications to semilinear hyperbolic
  equations and parabolic equations.
\newblock {\em Nonlinear Analysis: Theory, Methods \& Applications},
  64(12):2665--2687, 2006.

\bibitem{zhou2020_1}
J.~Zhou.
\newblock Initial boundary value problem for a inhomogeneous pseudo-parabolic
  equation.
\newblock {\em Electronic Research Archive}, 28(1):67, 2020.

\bibitem{zhou2021_200}
J.~Zhou, G.~Xu, and C.~Mu.
\newblock Analysis of a pseudo-parabolic equation by potential wells.
\newblock {\em Ann. Mat. Pura Appl. (4)}, 200(6):2741--2766, 2021.

\bibitem{zloshchastiev2010_16}
K.~G. Zloshchastiev.
\newblock Logarithmic nonlinearity in the theories of quantum gravity: origin
  of time and observational consequences.
\newblock {\em Grav. Cosmol.}, 16(4):288--297, 2010.

\end{thebibliography}
\end{document}